\providecommand{\U}[1]{\protect \rule{.1in}{.1in}}
\newtheorem{theorem}{Theorem}[section]
\newtheorem{definition}[theorem]{Definition}
\newtheorem{example}[theorem]{Example}
\newtheorem{proposition}[theorem]{Proposition}
\newtheorem{remark}[theorem]{Remark}
\newenvironment{proof}[1][Proof]{\noindent \textbf{#1.} }{\  \rule{0.5em}{0.5em}}
\numberwithin{equation}{section}
\begin{document}

\title{Discrete-time approximation for stochastic optimal control problems under the
$G$-expectation framework}
\author{Lianzi Jiang\thanks{Zhongtai Securities Institute for Financial Studies,
Shandong University, Jinan, Shandong 250100, PR China. jianglianzi95@163.com.}}
\date{}
\maketitle

\textbf{Abstract}. In this paper, we propose a class of discrete-time
approximation schemes for stochastic optimal control problems under the
$G$-expectation framework. The proposed schemes are constructed recursively
based on piecewise constant policy. We prove the convergence of the discrete
schemes and determine the convergence rates. Several numerical examples are
presented to illustrate the effectiveness of the obtained results.

\textbf{Key words}. stochastic optimal control, $G$-expectation, discrete
scheme, convergence rate

\textbf{MSC-classification}. 93-08 (49L20 49M25 60H10 65K10 93E20)

\section{Introduction}

Recently, Peng \cite{P2007,P2008,P2010} systematically introduced the notion
of $G$-expectation. Under the $G$-expectation framework, a new kind of
Brownian motion, called $G$-Brownian motion, was constructed. The related
stochastic calculus of It\^{o}'s type have been established. For a recent
account and development of $G$-expectation theory and its applications, we
refer the reader to \cite{HJ2021,HJY2014,RY2018,STZh2011,ZhC2012} and the
references therein.

It is well known that stochastic optimal control theory provides
a useful tool for mathematical financial models. However, in some utility models with ambiguity volatility,
it cannot be done within a probability space framework (see, e.g., \cite{EJ2013,EJ2014}). The $G$-expectation
framework does not require a probability space and is convenient to study
financial problems involving volatility uncertainty. Motivated by this, for
each fixed $T>0$, we consider a stochastic controlled system driven by
$G$-Brownian motion in the following form:
\[
\left \{
\begin{array}
[c]{ll}%
dX_{s}=b(s,X_{s},\alpha_{s})ds+\sigma(s,X_{s},\alpha_{s})dB_{s}+h(s,X_{s}%
,\alpha_{s})d\langle B\rangle_{s}, & s\in(t,T],\\
X_{t}=x. &
\end{array}
\right.
\]
The value function of the optimal control problem is defined by
\[
v(t,x)=\sup_{\alpha \in \mathcal{A}[t,T\mathcal{]}}\mathbb{\hat{E}}%
_{t,x}^{\alpha}\bigg[g(X_{T})+\int_{t}^{T}f(s,X_{s},\alpha_{s})ds\bigg],
\]
for $(t,x)\in \lbrack0,T]\times \mathbb{R}^{m}$. Under some standard
conditions, Hu et al. \cite{HJY2014} proved that the value function is the
viscosity solution of the following fully nonlinear Hamilton--Jacobi--Bellman
(HJB) equation:
\[
\left \{
\begin{array}
[c]{l}%
\partial_{t}v+\sup \limits_{a\in A}F\left(  t,x,D_{x}v,D_{x}^{2}v,a\right)
=0,\text{ \ }(t,x)\in \lbrack0,T)\times \mathbb{R}^{m},\\
v\left(  T,x\right)  =g\left(  x\right)  ,
\end{array}
\right.
\]
where
\begin{align*}
&  F\left(  t,x,D_{x}v,D_{x}^{2}v,a\right)  =G\left(  H\left(  t,x,D_{x}%
v,D_{x}^{2}v,a\right)  \right)  +\left \langle b\left(  t,x,a\right)
,D_{x}v\right \rangle +f\left(  t,x,a\right)  ,\\
&  H_{ij}\left(  t,x,D_{x}v,D_{x}^{2}v,a\right)  =\langle D_{x}^{2}v\sigma
_{i}\left(  t,x,a\right)  ,\sigma_{j}\left(  t,x,a\right)  \rangle+2\langle
D_{x}v,h_{ij}\left(  t,x,a\right)  \rangle.
\end{align*}

The basic framework for convergence of numerical schemes to viscosity
solutions of HJB equations was established by Barles and Souganidis
\cite{BS1991}. In particular, the provable order of convergence for the
second-order HJB equations was first obtained by Krylov in
\cite{Krylov1997,Krylov1999,Krylov2000}, and one of the main ideas is a method
named by himself \textquotedblleft shaking the coefficients\textquotedblright%
\ combined with a mollification procedure. This technique was further
developed by Barles and Jakobsen to apply to general monotone approximation
schemes (see \cite{BJ2002,BJ2005,BJ2007} and the references therein). Since
then, many results have been done on the numerical schemes for optimal control
problems and HJB equations (see, e.g.,
\cite{BF2014,CS2008,DJ2012,FTW2011,GZZ2015,HLZ2020,PR2020} and the references
therein). We also mention some important works for solving stochastic optimal
control problems by means of forward and backward stochastic differential
equations (see, e.g., \cite{ABYZ2020,ET2018,ETT2018,FZZ2020,GLTZhZh2017} and
the references therein). However, little attention has been paid to the
discrete schemes for stochastic optimal control problems under the
$G$-framework ($G$-SOCPs).

Owing to the sublinear nature of $G$-expectation, there is no density
representation for the $G$-normal distribution, which leads to the failure of
the classical Markov chain approximation (see, e.g., \cite{KD2001,Krylov1999}%
). Note that Peng \cite{P2010} established the central limit theorem in the
$G$-expectation framework ($G$-CLT), which provides a theoretical foundation
for approximating $G$-normal distributed random variables. In particular, an
outstanding contribution to the convergence rate of $G$-CLT was by Krylov
\cite{Krylov2020} using stochastic control method under different model
assumptions. Let us also note other important works on this topic. In
\cite{HL2020}, Huang and Liang studied the convergence rate for a more general
$G$-CLT via a monotone approximation scheme and obtained an explicit bound of
Berry-Esseen type. Fang et al. \cite{FPSS2019} and Song \cite{Song2020} proved
the convergence rate of $G$-CLT by using Stein's method.

In this paper, we devote ourselves to designing a discrete-time approximation
scheme for $G$-SOCPs. With the help of piecewise constant policy, Euler
time-stepping, and $G$-CLT, we propose a general discrete-time approximation
scheme for solving $G$-SOCPs. By choosing the parameter set, two different
kinds of numerical schemes are derived. We also establish the convergence
rates for the numerical schemes by using the \textquotedblleft shaking the
coefficients\textquotedblright \ method. The result shows that our discrete
scheme has a 1/6 order rate in the general case and a better rate of order 1/4
in the special case. We remark that a similar convergence result was acquired
in \cite{HL2020}, but with different problem settings and discrete schemes.
Several numerical examples are demonstrated to illustrate the obtained
theoretical findings.

The rest of the paper is organized as follows. In Section 2, we review some
basic notations and results on the $G$-expectation theory. We propose the
discrete-time approximation scheme for $G$-SOCPs in Section 3. The convergence
rate of the proposed scheme is proved in Section 4. Numerical examples are
given in Section 5.

\section{Preliminaries}

This section briefly introduces some notions and preliminaries in the
$G$-framework. For more details, we refer the reader to
\cite{P2007,P2008,P2010} and the references therein.

\begin{definition}
Let $\Omega$ be a given set and let $\mathcal{H}$ be a linear space of real
valued functions defined on $\Omega$, satisfies $c\in \mathcal{H}$ for each
constant $c$ and $\left \vert X\right \vert \in \mathcal{H}$ if $X\in \mathcal{H}%
$. $\mathcal{H}$ is considered as the space of random variables. A functional
$\mathbb{\hat{E}}$: $\mathcal{H}\rightarrow \mathbb{R}$ is called a sublinear
expectation: if for all $X,Y$ $\in \mathcal{H}$, it satisfies the following properties:

\begin{description}
\item[(i)] Monotonicity: If $X\geq Y$ then $\mathbb{\hat{E}}\left[  X\right]
\geq \mathbb{\hat{E}}\left[  Y\right]  ;$

\item[(ii)] Constant preservation: $\mathbb{\hat{E}}\left[  c\right]  =c;$

\item[(iii)] Sub-additivity: $\mathbb{\hat{E}}\left[  X+Y\right]
\leq \mathbb{\hat{E}}\left[  X\right]  +\mathbb{\hat{E}}\left[  Y\right]  ;$

\item[(iv)] Positive homogeneity: $\mathbb{\hat{E}}\left[  \lambda X\right]
=\lambda \mathbb{\hat{E}}\left[  X\right]  $ for each $\lambda>0.$
\end{description}
\end{definition}

The triple $(\Omega,\mathcal{H},\mathbb{\hat{E})}$ is called a sublinear
expectation space.

\begin{definition}
Let $X_{1}$ and $X_{2}$ be two $n$-dimensional random vectors defined
respectively in sublinear expectation spaces $(\Omega_{1},\mathcal{H}%
_{1},\mathbb{\hat{E}}_{1})$ and $(\Omega_{2},\mathcal{H}_{2},\mathbb{\hat{E}%
}_{2})$. They are called identically distributed, denoted by $X_{1}\overset
{d}{=}X_{2}$, if $\mathbb{\hat{E}}_{1}\left[  \varphi(X_{1})\right]
=\mathbb{\hat{E}}_{2}\left[  \varphi(X_{2})\right]  $, for all $\varphi \in
C_{b,Lip}(\mathbb{R}^{n})$, the space of bounded Lipschitz continuous
functions on $\mathbb{R}^{n}$.
\end{definition}

\begin{definition}
In a sublinear expectation space $(\Omega,\mathcal{H},\mathbb{\hat{E})}$, a
random vector $Y=(Y_{1,\ldots,}Y_{n})$, $Y_{i}\in \mathcal{H}$, is said to be
independent from another random vector $X=(X_{1,\ldots,}X_{m})$, $X_{i}%
\in \mathcal{H}$ under $\mathbb{\hat{E}}\left[  \cdot \right]  $, denoted by
$Y\perp X$, if for every test function $\varphi \in C_{b,Lip}(\mathbb{R}%
^{m}\times \mathbb{R}^{n})$ we have $\mathbb{\hat{E}}\left[  \varphi
(X,Y)\right]  =\mathbb{\hat{E}[\hat{E}[\varphi(}x,Y\mathbb{)]}_{x=X}]$.
\end{definition}

\begin{definition}
\label{G normal def}A $d$-dimensional random vector $X=(X_{1},\ldots,X_{d})$
in a sublinear expectation space $(\Omega,\mathcal{H},\mathbb{\hat{E})}$ is
called $G$-normal distributed if for each $a,b\geq0$ we have%
\[
aX+b\bar{X}=\sqrt{a^{2}+b^{2}}X,
\]
where $\bar{X}$ is an independent copy of $X$, $G:\mathbb{S(}d\mathbb{)}%
\rightarrow \mathbb{R}$ denotes the function
\[
G\left(  A\right)  :=\frac{1}{2}\mathbb{\hat{E}[}\left \langle
AX,X\right \rangle ],\text{ \ }A\in \mathbb{S(}d\mathbb{)},
\]
where $\mathbb{S(}d\mathbb{)}$ denotes the collection of $d\times d$ symmetric matrices.
\end{definition}

\begin{definition}
Let $\Omega_{T}=C_{0}([0,T];\mathbb{R}^{d})$ be the space of $\mathbb{R}^{d}%
$-valued continuous paths on $[0,T]$ with $\omega_{0}=0$, endowed with the
supremum norm, and $B_{t}(\omega)=\omega_{t}$ be the canonical process. Set
\[
Lip(\Omega_{T})=\{ \varphi(B_{t_{1}},\ldots,B_{t_{n}}):n\geq1,t_{1}%
,\ldots,t_{n}\in \lbrack0,T],\varphi \in C_{b,Lip}(\mathbb{R}^{d\times n})\}
\text{,}%
\]
where $C_{b,Lip}(\mathbb{R}^{d\times n})$ denotes the set of bounded Lipschitz
functions on $\mathbb{R}^{d\times n}$. $G$-expectation on $(\Omega
_{T},Lip(\Omega_{T}))$ is a sublinear expectation defined by%
\[
\mathbb{\hat{E}[}X\mathbb{]=\tilde{E}[}\varphi(\sqrt{t_{1}-t_{0}}\xi
_{1},\ldots,\sqrt{t_{n}-t_{n-1}}\xi_{n})\mathbb{]},
\]
for all $X=\varphi(B_{t_{1}}-B_{t_{0}},\ldots,B_{t_{n}}-B_{t_{n-1}})$, where
$\xi_{1},\ldots,\xi_{n}$ are identically distributed $d$-dimensional
$G$-normal distributed random vectors in a sublinear expectation space
$(\tilde{\Omega},\mathcal{\tilde{H}},\mathbb{\tilde{E})}$ such that $\xi
_{i+1}$ is independent from $(\xi_{1},\ldots,\xi_{i})$, $i=1,\ldots,n-1.$ The
corresponding canonical process $B_{t}=(B_{t}^{i})_{i=1}^{d}$ is called a
$G$-Brownian motion and $(\Omega_{T},Lip(\Omega_{T}),\mathbb{\hat{E})}$ is
called a $G$-expectation space.
\end{definition}

\begin{definition}
Assume that $X\in Lip(\Omega_{T})$ has the representation $X=$ $\varphi
(B_{t_{1}}-B_{t_{0}},\ldots,B_{t_{n}}-B_{t_{n-1}})$. The conditional
$G$-expectation $\mathbb{\hat{E}}_{t_{i}}$ of $X$ is defined by, for some
$1\leq i\leq n$,
\[
\mathbb{\hat{E}}_{t_{i}}[\varphi(B_{t_{1}}-B_{t_{0}},\ldots,B_{t_{n}%
}-B_{t_{n-1}})]=\tilde{\varphi}(B_{t_{1}}-B_{t_{0}},\ldots,B_{t_{i}%
}-B_{t_{i-1}}),
\]
where%
\[
\tilde{\varphi}(x_{1},\ldots,x_{i})=\mathbb{\hat{E}}_{t_{i}}[\varphi
(x_{1},\ldots,x_{i},B_{t_{i+1}}-B_{t_{i}},\ldots,B_{t_{n}}-B_{t_{n-1}})].
\]

\end{definition}

For each given $p\geq1$, define $\Vert X\Vert_{L_{G}^{p}}=(\mathbb{\hat{E}%
}[|X|^{p}])^{1/p}$ for $X\in Lip(\Omega_{T})$, and denote by $L_{G}^{p}%
(\Omega_{T})$ the completion of $Lip(\Omega_{T})$ under the norm $\Vert
\cdot \Vert_{L_{G}^{p}}$. Then for $t\in \left[  0,T\right]  $, $\mathbb{\hat
{E}}_{t}[\cdot]$ can be extended continuously to the completion $L_{G}%
^{1}(\Omega_{T})$ of $Lip(\Omega_{T})$ under the norm $\left \Vert
\cdot \right \Vert _{L_{G}^{1}}$.

\begin{theorem}
[\cite{DHP2011,HP2009}]\label{E repesent}There exists a weakly compact family
$\mathcal{P}$ of probability measures on $\left(  \Omega_{T},\mathcal{B}%
(\Omega_{T})\right)  $ such that
\[
\mathbb{\hat{E}}[X]=\sup_{P\in \mathcal{P}}\mathbb{E}_{P}[X],\text{ for all
}X\in L_{G}^{1}(\Omega_{T}).
\]
$\mathcal{P}$ is called a set that represents $\mathbb{\hat{E}}$.
\end{theorem}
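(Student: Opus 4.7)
My plan is to construct the representing family $\mathcal{P}$ in three stages: first represent $\mathbb{\hat{E}}$ as a supremum of finitely additive linear expectations via a Hahn-Banach argument, then upgrade these linear functionals to genuine Borel probability measures via a Daniell-type continuity property, and finally establish weak compactness through a tightness argument based on moment estimates for $G$-Brownian motion.

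For the first stage I would apply Hahn-Banach separation to the sublinear functional $\mathbb{\hat{E}}$ on the vector lattice $Lip(\Omega_T)$. For each fixed $X_0$, monotonicity and sublinearity produce a linear functional $E_{X_0}$ with $E_{X_0}\le \mathbb{\hat{E}}$ everywhere and $E_{X_0}[X_0]=\mathbb{\hat{E}}[X_0]$; collecting all such $E$ into a family $\mathcal{L}$ gives
\[
\mathbb{\hat{E}}[X]=\sup_{E\in \mathcal{L}} E[X], \qquad X\in Lip(\Omega_T).
\]
Each $E\in\mathcal{L}$ is monotone, linear and normalized, i.e., a finitely additive probability expectation on the algebra generated by cylindrical Lipschitz functions.

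For the second stage I would promote each $E\in\mathcal{L}$ to a countably additive probability measure. The essential analytic input is a Daniell-type continuity from above for $\mathbb{\hat{E}}$: if $X_n\in C_b(\Omega_T)$ satisfies $X_n\downarrow 0$ pointwise on $\Omega_T$, then $\mathbb{\hat{E}}[X_n]\downarrow 0$. Once this is in hand, every $E\in\mathcal{L}$ inherits $\sigma$-continuity from above at $0$, so the Daniell-Stone theorem extends it uniquely to a Borel probability measure $P$ on $\mathcal{B}(\Omega_T)$. This produces a family $\mathcal{P}_0$ with $\mathbb{\hat{E}}[X]=\sup_{P\in\mathcal{P}_0}\mathbb{E}_P[X]$ on $Lip(\Omega_T)$, and by density of $Lip(\Omega_T)$ in $L_G^1(\Omega_T)$ the identity extends to all of $L_G^1(\Omega_T)$. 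For the third stage, the $G$-normal scaling law $B_t-B_s\stackrel{d}{=}\sqrt{t-s}\,\xi$ combined with finiteness of $\mathbb{\hat{E}}[|\xi|^{2k}]$ yields, uniformly in $P\in\mathcal{P}_0$, the Kolmogorov-Chentsov bound $\mathbb{E}_P[|B_t-B_s|^{2k}]\le c_k|t-s|^k$. This gives tightness on $C_0([0,T];\mathbb{R}^d)$; Prokhorov and taking the weak closure then produce the desired weakly compact $\mathcal{P}$, which still represents $\mathbb{\hat{E}}$ because the map $P\mapsto\mathbb{E}_P[X]$ is weakly continuous for $X\in C_b(\Omega_T)$.

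The main obstacle is clearly the Daniell continuity from above in stage two. The algebraic definition of $\mathbb{\hat{E}}$ via iterated $G$-normal expectations on cylindrical functionals gives no direct handle on decreasing sequences of bounded continuous functions depending on the whole path, and this step typically requires either a capacity-theoretic/quasi-sure construction (as carried out in Denis-Hu-Peng) or a careful PDE analysis of the $G$-heat equation to control cylindrical approximations. Once this regularity is secured, the Hahn-Banach representation, the Daniell-Stone extension, and the tightness argument are all standard functional-analytic steps.
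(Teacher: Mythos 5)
This theorem is imported by the paper from \cite{DHP2011,HP2009} without proof, so the comparison must be against the arguments in those references. Your three-stage plan (Hahn--Banach, Daniell--Stone, Prokhorov) is the standard abstract skeleton, and stages one and three are sound as stated: the dominated linear functionals are automatically monotone and normalized, and the uniform bound $\mathbb{E}_P[|B_t-B_s|^{2k}]\leq \mathbb{\hat{E}}[|B_t-B_s|^{2k}]=c_k|t-s|^{k}$ does yield tightness, a weakly compact closure, and stability of the representation under weak limits for $X\in C_b(\Omega_T)$, with the extension to $L_G^1(\Omega_T)$ by density. The genuine gap is exactly where you flag it: stage two. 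Without the downward continuity $\mathbb{\hat{E}}[X_n]\downarrow 0$ for $X_n\downarrow 0$, the Hahn--Banach functionals are merely finitely additive and Daniell--Stone does not apply; and this continuity is not a soft consequence of the definition of $\mathbb{\hat{E}}$, which acts on cylinder functions through iterated $G$-normal expectations and gives no control over decreasing sequences depending on the whole path. Deferring this step to ``the capacity-theoretic construction of Denis--Hu--Peng'' is circular, because that construction is precisely the content of the theorem being proved.

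The cited works avoid the circularity by reversing the logic. They first exhibit the concrete family $\mathcal{P}_M$ of laws of $\int_0^{\cdot}\theta_s\,dW_s$ with $\theta$ a $[\underline{\sigma},\overline{\sigma}]$-valued adapted process (the family recorded in the paper immediately after the theorem), and prove the identity $\mathbb{\hat{E}}[\varphi(B_{t_1},\ldots,B_{t_n})]=\sup_{P\in\mathcal{P}_M}\mathbb{E}_P[\varphi(\cdot)]$ directly on cylinder functions by backward induction, using the fact that $u(t,x)=\mathbb{\hat{E}}[\varphi(x+B_{T-t})]$ solves the $G$-heat equation together with the classical stochastic-control verification representation of that PDE. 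Once the representation holds on $Lip(\Omega_T)$ with a tight family, each $\mathbb{E}_P$ is countably additive by construction, so continuity from above and the extension to $L_G^1$ come for free. If you want a self-contained proof, that PDE/verification step is the one you must supply; the functional-analytic machinery alone cannot manufacture countable additivity.
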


In the sequel, let $(B_{t})_{t\geq0}$ be a one dimensional $G$-Brownian motion
with $G(a)=\frac{1}{2}(\bar{\sigma}^{2}a^{+}-\underline{\sigma}^{2}a^{-})$,
where $\mathbb{\hat{E}}[B_{1}^{2}]=\bar{\sigma}^{2}$, $-\mathbb{\hat{E}} [-B_{1}
^{2}]=\underline{\sigma}^{2}$, $0\leq \underline{\sigma}\leq \bar{\sigma}
<\infty$. Denis et al. \cite{DHP2011} gave the following concrete set
$\mathcal{P}_{M}$ that represents $\mathbb{\hat{E}}$:%
\[%
\begin{array}
[c]{r}%
\mathcal{P}_{M}:=\left \{  P^{\theta}:P^{\theta}=P\circ \big(\int_{0}^{t}%
\theta_{s}dW_{s}\big)^{-1},\; \; \theta_{s}\in \Sigma \right \}  ,
\end{array}
\]
where $W_{\cdot}$ is a one dimensional Brownian motion on a probability space
$(\Omega,\mathcal{F},P)$ with filtration $\mathbb{F=\{}\mathcal{F}%
_{t}\mathcal{\}}_{t\geq0}$,\ and $\Sigma$ is the collection of all
$[\underline{\sigma},\overline{\sigma}]$-valued $\mathbb{F}$-adapted processes
$(\theta_{s})_{0\leq s\leq T}$.

Let $\{t_{0},t_{1},\ldots,t_{N}\}$ be a sequence of partitions of $[0,T]$ and
set $M_{G}^{0}(0,T)=\{ \eta_{t}(\omega)=\sum_{n=0}^{N-1}\xi_{n}(\omega
)I_{[t_{n},t_{n+1})}$ $(t):\xi_{n}\in Lip(\Omega_{t_{n}})\}$. For each given
$p\geq1$, denote by $M_{G}^{p}(0,T)$ the completion of $M_{G}^{0}(0,T)$ with
the norm $\Vert \eta \Vert_{M_{G}^{p}}=(\mathbb{\hat{E}}[\int_{0}^{T}|\eta
_{s}|^{p}ds])^{1/p}$. Then, for any $\xi_{t}\in$ $M_{G}^{2}(0,T)$ and
$\eta_{t}\in$ $M_{G}^{1}(0,T)$, the $G$-It\^{o} integral $\int_{0}^{T}\xi
_{t}dB_{t}$ and $\int_{0}^{T}\eta_{t}d\langle B\rangle_{t}$ are well defined,
see \cite{P2007,P2008,P2010} for more details.

\begin{proposition}
\label{Prop Ito's integral}For each $\xi_{t}\in$ $M_{G}^{2}(0,T)$ and
$\eta_{t}\in$ $M_{G}^{1}(0,T)$, we have

\begin{description}
\item[(i)] $\mathbb{\hat{E}}\left[  \int_{0}^{T}\xi_{t}dB_{t}\right]  =0;$
$\  \mathbb{\hat{E}}\left[  \big(\int_{0}^{T}\xi_{t}dB_{t}\big)^{2}\right]
=\mathbb{\hat{E}}\left[  \int_{0}^{T}\xi_{t}^{2}d\langle B\rangle_{t}\right]
;$

\item[(ii)] $\mathbb{\hat{E}}\left[  \left \vert \int_{0}^{T}\eta_{t}d\langle
B\rangle_{t}\right \vert \right]  \leq \overline{\sigma}^{2}\mathbb{\hat{E}%
}\left[  \int_{0}^{T}\left \vert \eta_{t}\right \vert dt\right]  .$
\end{description}
\end{proposition}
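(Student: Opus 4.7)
The plan is to prove both identities first for step processes in $M_G^0(0,T)$ and then extend by density, in the spirit of the $G$-It\^o construction of Peng~\cite{P2007,P2008,P2010}.

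For (i), take a simple integrand $\xi_t=\sum_{n=0}^{N-1}\xi_n I_{[t_n,t_{n+1})}(t)$ with $\xi_n\in Lip(\Omega_{t_n})$, so that $\int_0^T\xi_s\,dB_s=\sum_n\xi_n\Delta B_n$ with $\Delta B_n:=B_{t_{n+1}}-B_{t_n}$. Since $\Delta B_n$ is independent of $Lip(\Omega_{t_n})$ under $\mathbb{\hat{E}}$ and, by symmetry of the $G$-normal distribution, $\mathbb{\hat{E}}_{t_n}[\Delta B_n]=-\mathbb{\hat{E}}_{t_n}[-\Delta B_n]=0$, the decomposition $\xi_n=\xi_n^+-\xi_n^-$ together with positive homogeneity yields $\mathbb{\hat{E}}_{t_n}[\xi_n\Delta B_n]=\xi_n^+\mathbb{\hat{E}}_{t_n}[\Delta B_n]+\xi_n^-\mathbb{\hat{E}}_{t_n}[-\Delta B_n]=0$. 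Iterating the tower property down from $n=N-1$ gives $\mathbb{\hat{E}}[\int_0^T\xi_s\,dB_s]=0$, and repeating the argument with $-\xi$ gives $-\mathbb{\hat{E}}[-\int_0^T\xi_s\,dB_s]=0$, so the $G$-stochastic integral is a \emph{symmetric} functional on $M_G^0$. For the second identity of (i), I would invoke Peng's $G$-It\^o formula to expand
\[
\Bigl(\int_0^T\xi_s\,dB_s\Bigr)^{\!2}=2\int_0^T\!\Bigl(\int_0^s\!\xi_r\,dB_r\Bigr)\xi_s\,dB_s+\int_0^T\!\xi_s^2\,d\langle B\rangle_s.
\]
The first term on the right is again a $G$-It\^o integral of an $M_G^2$ integrand, so by the previous step both its $\mathbb{\hat{E}}$ and its $-\mathbb{\hat{E}}[-\,\cdot\,]$ vanish; combined with the elementary observation that $\mathbb{\hat{E}}[X+Y]=\mathbb{\hat{E}}[Y]$ whenever $\mathbb{\hat{E}}[X]=-\mathbb{\hat{E}}[-X]=0$ (an immediate consequence of sub-additivity applied to $\pm X$), this yields the isometry for simple $\xi$.

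For (ii), I would exploit the representation $\mathcal{P}_M$ from Theorem~\ref{E repesent}: under each $P^\theta\in\mathcal{P}_M$ one has $\langle B\rangle_t=\int_0^t\theta_s^2\,ds$ with $\theta_s\in[\underline{\sigma},\bar{\sigma}]$, so the pathwise bound $d\langle B\rangle_t\le\bar{\sigma}^2\,dt$ holds quasi-surely. For simple $\eta\in M_G^0$ this gives $\bigl|\int_0^T\eta_s\,d\langle B\rangle_s\bigr|\le\int_0^T|\eta_s|\,d\langle B\rangle_s\le\bar{\sigma}^2\int_0^T|\eta_s|\,ds$ pathwise, and monotonicity of $\mathbb{\hat{E}}$ completes the inequality on $M_G^0$. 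The final step in both parts is to approximate an arbitrary $\xi\in M_G^2$ (resp.\ $\eta\in M_G^1$) by simple processes in $\|\cdot\|_{M_G^2}$ (resp.\ $\|\cdot\|_{M_G^1}$) and pass to the limit via the very bounds just proved, since the isometry provides continuity of $\xi\mapsto\int_0^T\xi_s\,dB_s$ from $M_G^2$ into $L_G^2(\Omega_T)$ and (ii) provides continuity of $\eta\mapsto\int_0^T\eta_s\,d\langle B\rangle_s$ from $M_G^1$ into $L_G^1(\Omega_T)$.

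I do not expect a genuine obstruction; the only point requiring care is the mildly bootstrap-like appearance of the argument — the isometry is simultaneously what one is proving and what gets used to define the integral for general $\xi\in M_G^2$. This is handled by first establishing all identities on $M_G^0$, then using them to extend the operators by continuity to the completions, and finally inheriting the identities on the extended domains by closedness. The one computation requiring slight attention is verifying that the integrand $(\int_0^s\xi_r\,dB_r)\xi_s$ appearing in the It\^o expansion lies in $M_G^2$, which follows by combining the isometry on simple processes with a standard cut-off/approximation argument.
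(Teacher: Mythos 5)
The paper does not actually prove this proposition: it is quoted as a standard property of the $G$-It\^{o} integral with a pointer to Peng's works \cite{P2007,P2008,P2010}, so there is no in-paper argument to compare against. Your reconstruction is essentially the textbook proof (cf.\ Chapter 3 of \cite{P2010}) and I see no genuine gap: establish the identities on $M_G^0$, then extend by density using the very estimates proved. Two small points deserve care. First, the displayed line $\mathbb{\hat{E}}_{t_n}[\xi_n\Delta B_n]=\xi_n^{+}\mathbb{\hat{E}}_{t_n}[\Delta B_n]+\xi_n^{-}\mathbb{\hat{E}}_{t_n}[-\Delta B_n]$ reads as if sublinear expectation were additive; it is not, but the step is saved because for each fixed value $x=\xi_n(\omega)$ at most one of $x^{+},x^{-}$ is nonzero, so $x\Delta B_n$ is literally either $x^{+}\Delta B_n$ or $x^{-}(-\Delta B_n)$ and positive homogeneity applies directly --- you should say this rather than write a sum of expectations. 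Second, your use of the $G$-It\^{o} formula for the isometry is where the bootstrap risk you flag actually lives: the cleanest way to discharge it is to note that for a simple $\xi$ the identity $\bigl(\sum_n\xi_n\Delta B_n\bigr)^2=\sum_n\xi_n^2\Delta\langle B\rangle_n+2\sum_n\bigl(\int_0^{t_n}\xi\,dB\bigr)\xi_n\Delta B_n+2\sum_n\xi_n^2\int_{t_n}^{t_{n+1}}(B_s-B_{t_n})\,dB_s$ is pure algebra plus the \emph{definition} $\langle B\rangle_t=B_t^2-2\int_0^tB_s\,dB_s$, so no prior It\^{o} formula is needed; one then only has to extend the zero-mean/symmetry property from bounded $\mathcal{F}_{t_n}$-measurable coefficients to $L^2_G(\Omega_{t_n})$ ones by truncation, exactly as you indicate. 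Your proof of (ii) via the representing set $\mathcal{P}_M$ and the pathwise bound $d\langle B\rangle_t\le\bar{\sigma}^2\,dt$ is correct and arguably more transparent than arguing increment by increment on the grid.
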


We end this section with Krylov's regularization results. Take a nonnegative
$\zeta \in C_{0}^{\infty}(\mathbb{R}^{m+1})$ with unit integral and support in
$\{(t,x):-1<t<0,|x|<1\}$ and for $\varepsilon \in(0,1)$ let $\zeta
_{\varepsilon}(t,x)=\varepsilon^{-m-2}\zeta(t/\varepsilon^{2},x/\varepsilon)$.
For locally integrable $u(t,x)$, we denote the mollification of $u$ by
\[
u^{(\varepsilon)}(t,x)=u(t,x)\ast \zeta_{\varepsilon}(t,x).
\]
Furthermore, if $u$ is 1/2-H\"{o}lder continuity in time and Lipschitz
continuity in space, then the standard properties of mollifiers (see Lemma 3.5
in \cite{Krylov1999} or C.4 in \cite{Evans2010}) indicate that
\begin{equation}
\left \Vert u-u^{(\varepsilon)}\right \Vert _{\infty}\leq C\varepsilon \text{
\ and \ }\left \Vert \partial_{t}^{l}D_{x}^{k}u^{(\varepsilon)}\right \Vert
_{\infty}\leq C\varepsilon^{1-2l-k}\text{ \ for }k+l\geq1. \label{bound}%
\end{equation}

\section{Discrete-time approximation}

In this section, we present a class of discrete-time approximation schemes for
solving G-SOCPs.

\subsection{Formulation of the problem}

We first give the definition of admissible controls.

\begin{definition}
For each $t\in \lbrack0,T]$, $\alpha$ is said to be an admissible control on
$[t,T]$, if it satisfies the following conditions:

\begin{description}
\item[(i)] $\alpha:[t,T]\times \Omega \rightarrow A$, where $A$ is a compact set
of $\mathbb{R}^{q};$

\item[(ii)] $\alpha \in M_{G}^{2}(t,T;\mathbb{R}^{q}).$
\end{description}
\end{definition}

The set of admissible controls on $[t,T]$ is denoted by $\mathcal{A}%
[t,T\mathcal{]}$.

Let $t\in \lbrack0,T]$, $x\in \mathbb{R}^{m}$, and $\alpha \in \mathcal{A}%
[t,T\mathcal{]}$. Consider the following $m$-dimensional controlled system:%
\begin{equation}
\left \{
\begin{array}
[c]{ll}%
dX_{s}^{\alpha,t,x}=b(s,X_{s}^{\alpha,t,x},\alpha_{s})ds+\sigma(s,X_{s}%
^{\alpha,t,x},\alpha_{s})dB_{s}+h(s,X_{s}^{\alpha,t,x},\alpha_{s})d\langle
B\rangle_{s}, & s\in(t,T],\\
X_{s}^{\alpha,t,x}=x, &
\end{array}
\right.  \label{1}%
\end{equation}
and%
\begin{equation}
v(t,x)=\sup_{\alpha \in \mathcal{A}[t,T\mathcal{]}}\mathbb{\hat{E}}%
_{t,x}^{\alpha}\bigg[g(X_{T})+\int_{t}^{T}f(s,X_{s},\alpha_{s})ds\bigg],
\label{2}%
\end{equation}
where $B_{\cdot}$ is a one dimensional $G$-Brownian motion defined in the
$G$-expectation space $(\Omega_{T},L_{G}^{1}(\Omega_{T}),\mathbb{\hat{E}})$,
$\langle B\rangle_{\cdot}$ is the quadratic variation process of the
$G$-Brownian motion, $b,\sigma,h$ are deterministic functions with values in
$\mathbb{R}^{m}$, and $g,f$ are two $\mathbb{R}$-valued functions. The
indices\ $\alpha,t,x$ on the $G$-expectation $\mathbb{\hat{E}}$ indicate that
the state process $X_{\cdot}$ depends on the starting point $(t,x)$ and the
control $\alpha$.

We need the following standard assumptions on the optimal control problem:

\begin{description}
\item[(H1)] The maps $b,\sigma,h:[0,\infty)\times \mathbb{R}^{m}\times
A\rightarrow \mathbb{R}^{m}$ are continuous, and there exists a constant $L>0$
such that, for any $t,s\in \lbrack0,\infty)$ $x,y\in \mathbb{R}^{m}$, and $a\in
A$,
\[
|{\varphi}(t,x,a)-{\varphi}(s,y,a)|\leq L\left(  |t-s|^{\frac{1}{2}%
}+|x-y|\right)  \  \text{and}\ |{\varphi}(t,x,a)|\leq L\text{,}\  \  \text{for
}{\varphi}\text{$={b},\sigma$},{h}\text{.}%
\]

\item[(H2)] The maps $f:[0,\infty)\times \mathbb{R}^{m}\times A\rightarrow
\mathbb{R}$ and $g:\mathbb{R}^{m}\rightarrow \mathbb{R}$ are continuous, and
there exists a constant $L>0$ such that, for any $t,s\in \lbrack0,\infty)$,
$x,y\in \mathbb{R}^{m}$, and $a\in A$,
\[
|{g}(x)-g(y)|+|{f}(t,x,a)-{f}(s,y,a)|\leq L\left(  |t-s|^{\frac{1}{2}%
}+|x-y|\right)  \text{ and }|{f}(t,x,a)|\leq L.
\]

\end{description}

Let $N\in \mathbb{N}$ and $\Delta=T/N>0$. We introduce a time mesh
$t_{n}=n\Delta$ for $n=0,1,\ldots,N$. Let $\mathcal{A}_{h}[t,T]$ be the subset
of $\mathcal{A}[0,T]$ consisting of all processes $\alpha_{\cdot}$ which are
constant in the intervals $[t_{n},t_{n+1})$, $n=0,1,\ldots,N-1$. For
simplicity, we will identify any $\alpha \in \mathcal{A}_{h}$ by the sequence of
random variables $a_{i}$ taking values in $A$ and denote $\alpha=(a_{0,}%
a_{1},\ldots,a_{N-1})$. For any $(t,x)\in \lbrack0,T]\times \mathbb{R}^{m}$,
define the value function corresponding to the piecewise constant control set
$\mathcal{A}_{h}[t,T]$%
\[
v_{\Delta}(t,x)=\sup_{\alpha \in \mathcal{A}_{h}[t,T]}\mathbb{\hat{E}}%
_{t,x}^{\alpha}\bigg[g(X_{T})+\int_{t}^{T}f(s,X_{s},\alpha_{s})ds\bigg].
\]
Since $\mathcal{A}_{h}[t,T]\subset \mathcal{A}[t,T]$, it is clear that
$v(t,x)-v_{\Delta}(t,x)\geq0$.

\begin{theorem}
\label{lemma1}Assume that \emph{(H1)-(H2)} hold. Then there exists a constant
$C$ independent of $\Delta$ such that%
\[
v(t,x)-v_{\Delta}(t,x)\leq C\Delta^{\frac{1}{4}},
\]
for any $\left(  t,x\right)  \in \lbrack0,T]\times \mathbb{R}^{n}$.
\end{theorem}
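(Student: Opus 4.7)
The inequality $v_\Delta\le v$ is immediate from $\mathcal{A}_h[t,T]\subset\mathcal{A}[t,T]$, so only the reverse bound $v-v_\Delta\le C\Delta^{1/4}$ requires work. A naive discretization of a near-optimal control $\alpha^*\in\mathcal{A}[t,T]$ by $\alpha^{*,\Delta}_s:=\alpha^*_{t_n}$ on $[t_n,t_{n+1})$ fails, because (H1)--(H2) impose no time regularity on controls and $b(s,\cdot,\alpha^*_s)-b(s,\cdot,\alpha^*_{t_n})$ admits no useful pointwise bound. I will therefore follow Krylov's ``shaking the coefficients'' technique in the Barles--Jakobsen formulation and compare a smoothed version of the value function directly with the piecewise-constant DPP.

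A standard $G$-SDE flow argument first shows under (H1)--(H2) that $v$ is bounded, Lipschitz in $x$, and $1/2$-H\"older in $t$, and is the unique bounded viscosity solution of the $G$-HJB recalled in the introduction. For $\varepsilon\in(0,1)$, introduce the shaken value function $v^\varepsilon$ obtained by enlarging the control set to include $\mathbb{F}$-adapted space-time perturbations $(e_s,\eta_s)\in\overline{B}_\varepsilon(0)\times\overline{B}_{\varepsilon^2}(0)$ of the arguments of $b,\sigma,h,f$ and of the argument of $g$. By the Lipschitz estimates on the coefficients, $0\le v^\varepsilon-v\le C\varepsilon$, and for every fixed shift $(y_0,\tau_0)$ in the shaking box the translate $v^\varepsilon(\cdot-\tau_0,\cdot-y_0)$ is still a viscosity subsolution of the original $G$-HJB. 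Set $v^{\varepsilon,\delta}=v^\varepsilon\ast\zeta_\delta$ with $\delta\le\varepsilon$, so that the support of $\zeta_\delta$ lies in the shaking box; since the operator $\partial_t+\sup_{a\in A}F(\cdot,a)$ is convex in $(D_xv,D_x^2v)$---each $F(\cdot,a)$ being the sum of the convex $G(H(\cdot,a))$ and terms linear in $D_xv$---convex mollification of the translate subsolutions yields a classical subsolution $v^{\varepsilon,\delta}$ of the original $G$-HJB, with $|v^{\varepsilon,\delta}-v|\le C(\varepsilon+\delta)$ and derivative bounds $\|\partial_t^l D_x^k v^{\varepsilon,\delta}\|_\infty\le C\delta^{1-2l-k}$ by (\ref{bound}).

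Next I compare $v^{\varepsilon,\delta}$ with $v_\Delta$ through the piecewise-constant DPP. Build $\alpha\in\mathcal{A}_h[0,T]$ by choosing on each $[t_n,t_{n+1})$ a measurable selector $a_n=a(t_n,X^\alpha_{t_n})$ of the argmax of $a\mapsto F(t_n,X^\alpha_{t_n},D_xv^{\varepsilon,\delta},D_x^2v^{\varepsilon,\delta},a)$. Apply the $G$-It\^o formula to $v^{\varepsilon,\delta}(s,X^\alpha_s)$ on each interval, take conditional $G$-expectation, and use Proposition~\ref{Prop Ito's integral} to dominate the $d\langle B\rangle_s$-integral by $2G(\cdot)\,ds$; the result is an inequality expressing the increment of $v^{\varepsilon,\delta}$ through the integral of $\partial_tv^{\varepsilon,\delta}+F(\cdot,a_n)-f(\cdot,a_n)$. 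The classical subsolution inequality at $(t_n,X^\alpha_{t_n})$ cancels the principal term, and Taylor-expanding the integrand about $(t_n,X^\alpha_{t_n})$, with $\hat{\mathbb{E}}|X^\alpha_s-X^\alpha_{t_n}|\le C(s-t_n)^{1/2}$ and the derivative bounds on $v^{\varepsilon,\delta}$, produces a per-step residual whose summed contribution is of order $\Delta^{1/2}/\delta$. Combining with the mollification error $O(\varepsilon+\delta)$ yields
\[
v-v_\Delta \le C\!\left(\varepsilon+\delta+\frac{\Delta^{1/2}}{\delta}\right),
\]
and the choice $\varepsilon=\delta=\Delta^{1/4}$ (compatible with $\delta\le\varepsilon$) produces the claimed $O(\Delta^{1/4})$ rate.

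The principal obstacle is the subsolution-preservation step: transferring the Barles--Jakobsen shaking/mollification machinery to the $G$-framework requires verifying that $\mathbb{F}$-adapted shaking shifts really produce a subsolution of the original HJB, that convex mollification preserves the viscosity subsolution property for the fully nonlinear $G$-HJB, and that the measurable argmax selector used to build $\alpha$ is compatible with the adaptedness required by $\mathcal{A}_h$ in the sublinear-expectation setting. Once these structural points are in place, the derivative bookkeeping that produces the $1/4$-rate is routine Taylor-estimation.
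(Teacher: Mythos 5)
Your strategy reverses the one used in the paper, and the reversal is where the proof breaks down quantitatively. You shake and mollify the exact value function $v$ to obtain a smooth subsolution $v^{\varepsilon,\delta}$ of the HJB and then build an explicit piecewise constant control from an argmax selector; this is Krylov's original scheme from \cite{Krylov1999}, whose known yield is $\Delta^{1/6}$, not $\Delta^{1/4}$. The gap is in your claimed residual $\Delta^{1/2}/\delta$: after freezing the control $a_n$ on $[t_n,t_{n+1})$, the subsolution inequality is available only at $(t_n,X_{t_n})$, and transporting it to $(s,X_s)$ costs the oscillation of $\sigma^2 D_x^2 v^{\varepsilon,\delta}$, which is of size $\|D_x^3 v^{\varepsilon,\delta}\|\,\hat{\mathbb{E}}|X_s-X_{t_n}|\sim \delta^{-2}\Delta^{1/2}$ per unit time, not $\delta^{-1}\Delta^{1/2}$. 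Balancing $\delta+\delta^{-2}\Delta^{1/2}$ forces $\delta=\Delta^{1/6}$ and gives only the $1/6$ rate. One could hope to sharpen the transport estimate to $\delta^{-3}\Delta$ by a second-order It\^o--Taylor expansion of the frozen integrand, but that integrand contains $b,\sigma,f$, which under (H1)--(H2) are merely Lipschitz/H\"older and hence not $C^{1,2}$; making this rigorous would require additional mollification of the coefficients and bookkeeping that the proposal does not supply. On top of this, the measurable argmax selector must yield a control lying in $M_G^2$ (quasi-continuity matters in the $G$-framework), an issue you flag as an ``obstacle'' but do not resolve.

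The paper avoids all of these difficulties by arguing in the opposite direction, following \cite{JPR2019}: it shakes and mollifies the piecewise-constant-policy value function $v_\Delta$ itself, producing a smooth $\omega_\varepsilon$ with $\|\omega_\varepsilon-v_\Delta\|_\infty\le C\varepsilon$ that satisfies the one-step super-dynamic-programming inequality (\ref{A.1}). A single-step It\^o--Taylor expansion of (\ref{A.1}), in which only derivatives of the smooth $\omega_\varepsilon$ appear (so $\|D_x^4\omega_\varepsilon\|\le C\varepsilon^{-3}$ controls the remainder), shows that $\omega_\varepsilon$ is an approximate supersolution of the HJB with defect $C\varepsilon^{-3}\Delta$ as in (\ref{A.3}); a verification argument along an \emph{arbitrary} admissible control, using $\int\eta\,d\langle B\rangle_s\le 2\int G(\eta)\,ds$, then gives $\tilde v-v_\Delta\le C(\varepsilon+\Delta^{1/2}+\varepsilon^{-3}\Delta)$, and $\varepsilon=\Delta^{1/4}$ yields the stated rate. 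No argmax selection and no pointwise transport of a subsolution inequality are needed. To repair your argument you would have to either rigorously justify a residual of order $\delta^{-1}\Delta^{1/2}$ (or $\delta^{-3}\Delta$), or switch to the supersolution-of-the-DPP direction as the paper does.
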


This theorem is proved in Appendix, and it is based on the \textquotedblleft
shaking the coefficients\textquotedblright \ method of Krylov in
\cite{Krylov1999} and the improved result of Jakobsen et al. in \cite{JPR2019}.

\subsection{Euler scheme}

We now give an approximation of the process $X_{\cdot}$ by the Euler method.
In what follows, we shall denote $\mathcal{A}_{h}[t,T]$ as $\mathcal{A}_{h}$
without causing confusion. For any fixed $\alpha=(a_{0,}a_{1},\ldots
,a_{N-1})\in \mathcal{A}_{h}$, we denote $\Delta B:=B_{t_{n+1}}-B_{t_{n}}\ $and
$\Delta \langle B\rangle:=\langle B\rangle_{t_{n+1}}-\langle B\rangle_{t_{n}}$
and define the Euler approximation process $\bar{X}_{\cdot}^{\alpha,t_{n},x}$
recursively by%
\begin{equation}
\bar{X}_{t_{i+1}}=\bar{X}_{t_{i}}+b(t_{i},\bar{X}_{t_{i}},a_{i})\Delta
+\sigma(t_{i},\bar{X}_{t_{i}},a_{i})\Delta B+h(t_{i},\bar{X}_{t_{i}}%
,a_{i})\Delta \langle B\rangle, \label{Euler_SDE}%
\end{equation}
for $i=n,\ldots,N-1$, with $\bar{X}_{t_{n}}=x$. Under the assumptions
(H1)-(H2), similar to the proof in \cite{KP1992}, one can obtain that%
\begin{equation}
\mathbb{\hat{E}}\bigg[\sup_{t\in \lbrack t_{n},t_{n+1}]}\left \vert
X_{t}^{\alpha,t_{n},x}-\bar{X}_{t_{n}}^{\alpha,t_{n},x}\right \vert
^{2}\bigg]\leq C\Delta,\; \label{Euler_1/2 estimate}%
\end{equation}
for any $\alpha \in \mathcal{A}_{h}$, $n=0,1,\ldots,N-1$, where $C$ is a
positive constant independent of $\Delta$.

For any $n=0,1,\ldots,N-1$, $x\in \mathbb{R}^{m}$, define
\begin{equation}
\bar{v}_{\Delta}(t_{n},x)=\sup_{\alpha \in \mathcal{A}_{h}}\mathbb{\hat{E}%
}_{t_{n},x}^{\alpha}\bigg[g(\bar{X}_{T})+\sum_{i=n}^{N-1}f(t_{i},\bar
{X}_{t_{i}},a_{i})\Delta \bigg]. \label{v^bar_h}%
\end{equation}
Furthermore, $\bar{v}_{\Delta}$ satisfies the following dynamic programming
principle (DPP):
\begin{equation}%
\begin{split}
&  \bar{v}_{\Delta}(T,x)=g(x),\\
&  \bar{v}_{\Delta}(t_{n},x)=\sup_{a\in A}\mathbb{\hat{E}}_{t_{n},x}%
^{a}\left[  \bar{v}_{\Delta}(t_{n+1},\bar{X}_{t_{n+1}})+f(t_{n},x,a)\Delta
\right]  ,
\end{split}
\label{DP v^bar_h}%
\end{equation}
for $n=0,\ldots,N-1$, $x\in \mathbb{R}^{m}$. With the help of Theorem
\ref{lemma1} and (\ref{Euler_1/2 estimate}), it is easy to check that for any
$n=0,\ldots,N-1$, $x\in \mathbb{R}^{m}$,
\begin{equation}
|v(t_{n},x)-\bar{v}_{\Delta}(t_{n},x)|\leq|v(t_{n},x)-v_{\Delta}%
(t_{n},x)|+|v_{\Delta}(t_{n},x)-\bar{v}_{\Delta}(t_{n},x)|\leq C\Delta
^{\frac{1}{4}}. \label{Euler approxi}%
\end{equation}

Define $B_{1}:=\{x\in \mathbb{R}^{m}:|x|<1\}$\ and $B=A\times \{(\mu
,\lambda):\mu \in(-1,0),\lambda \in B_{1}\}$. Extend $b,\sigma,f$ for negative
$t$ following the example $b(t,x,\alpha)=b(0,x,\alpha)$. For a fixed
$\varepsilon \in(0,1]$ and any $\beta=(\alpha,\mu,\lambda)\in B$ let
\[
b(t,x,\beta)=b_{\varepsilon}(t,x,\beta)=b(t+\varepsilon^{2}\mu,x+\varepsilon
\lambda,\alpha),
\]
and similarly define $\sigma(t,x,\beta)$ and $f(t,x,\beta)$. We denote by
$\mathcal{B}_{h}$ the set of $B$-valued progressively measurable processes
which are constant in each time interval. Now we proceed with the
regularization of the value function $\bar{v}_{\Delta}$. Letting
$S=T+\varepsilon^{2}$ and $N_{s}=[S/\Delta]$, for any $x\in \mathbb{R}^{m}$,
$n=0,\ldots,N_{s}$, define the following \textquotedblleft shaken" value
function:%
\[
\bar{u}_{\Delta}(t_{n},x)=\sup_{\beta \in \mathcal{B}_{h}}\mathbb{\hat{E}%
}_{t_{n},x}^{\beta}\bigg[g(\bar{X}_{t_{N_{s}}})+\sum_{i=n}^{N_{s}}f(t_{i}%
,\bar{X}_{t_{i}},\beta_{i})\Delta \bigg],
\]
where $\bar{X}_{\cdot}^{\beta,t_{n},x}$ is recursively defined by
\[
\bar{X}_{t_{i+1}}=\bar{X}_{t_{i}}+b(t_{i},\bar{X}_{t_{i}},\beta_{i}%
)\Delta+\sigma(t_{i},\bar{X}_{t_{i}},\beta_{i})\Delta B+h(t_{i},\bar{X}%
_{t_{i}},\beta_{i})\Delta \langle B\rangle,
\]
for $i=n,\ldots,N_{s}-1$ with $\bar{X}_{t_{n}}=x$. Moreover, the shaken value
function satisfies the following DPP:%
\begin{equation}
\bar{u}_{\Delta}(t_{n},x)=\sup_{\beta \in B}\mathbb{\hat{E}}_{t_{n},x}^{\beta
}\bigg[\bar{u}_{\Delta}(t_{n+1},\bar{X}_{t_{i+1}})+f(t_{n},x,\beta
)\Delta \bigg], \label{u^bar DPP}%
\end{equation}
for $n=0,1,\ldots,N_{s}-1$, $x\in \mathbb{R}^{m}$.

It is convenient to extend $\bar{v}_{\Delta}$ and $\bar{u}_{\Delta}\ $defined
on the grid points to the whole of $[0,T]$ and $[0,S]$, respectively, keeping
its values on $\{t_{n}\}_{0\leq n\leq N_{s}}$ and making it equal to the value
at $t_{n}$ on each interval $[t_{n},t_{n+1})$. The following standard result
is obtained by referring to the proof of Lemma 2.2 of \cite{Krylov2020} and
using the estimate of Proposition 5.3.1 of \cite{P2010}.

\begin{proposition}
\label{prop u^-}Assume that \emph{(H1)-(H2)} hold. Then, there exists a
constant $C\geq0$ such that for any $t\in \lbrack0,T]$, $x\in \mathbb{R}^{m}$,
\[
|\bar{u}_{\Delta}(t,x)-\bar{v}_{\Delta}(t,x)|\leq C\varepsilon,
\]
and for any $t,s\in \lbrack0,S]$, $x,y\in \mathbb{R}^{m}$
\[
|\bar{u}_{\Delta}(t,x)-\bar{u}_{\Delta}(s,y)|\leq C(|t-s|^{\frac{1}{2}}%
+\Delta^{\frac{1}{2}}+|x-y|).
\]

\end{proposition}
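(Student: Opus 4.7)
The plan is to prove the two bounds separately: the proximity $|\bar u_\Delta - \bar v_\Delta| \le C\varepsilon$ by a two-sided estimate, and the regularity by combining the spatial and temporal estimates.

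For the proximity bound, I would first show $\bar v_\Delta \le \bar u_\Delta + C\varepsilon$. Given $\alpha = (a_0,\ldots,a_{N-1}) \in \mathcal{A}_h$, embed it into $\mathcal{B}_h$ by $\beta_i = (a_i, 0, 0)$ (and extend arbitrarily for $N \le i < N_s$). Since $(\mu,\lambda) = (0,0)$ freezes all coefficients at their unshaken values, $\bar X^\beta \equiv \bar X^\alpha$ on $[t_n, t_N]$, and the difference between the two payoffs arises only from the trailing window $[T, t_{N_s}]$: the extra running cost contributes at most $L(\varepsilon^2 + \Delta)$ by boundedness of $f$, and the change of terminal point contributes $L\,\mathbb{\hat{E}}[|\bar X^\beta_{t_{N_s}} - \bar X^\beta_T|] \le C(\varepsilon^2 + \Delta)^{1/2} \le C\varepsilon$ via the standard $G$-Euler moment estimate. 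For the reverse $\bar u_\Delta \le \bar v_\Delta + C\varepsilon$, take any $\beta = (\alpha, \mu, \lambda) \in \mathcal{B}_h$ and compare $\bar X^\beta$ with the unshaken process $\bar X^\alpha$ sharing the same $A$-component $\alpha$. Because $|\mu_i|^{1/2}, |\lambda_i| \le 1$, (H1) gives $|b_\varepsilon - b|, |\sigma_\varepsilon - \sigma|, |h_\varepsilon - h| \le 2L\varepsilon$, and similarly for $f$. A one-step error decomposition combined with Proposition \ref{Prop Ito's integral} (to absorb the $dB$ and $d\langle B\rangle$ contributions) and discrete Gronwall under $\mathbb{\hat{E}}$ yields $\mathbb{\hat{E}}[\sup_i |\bar X^\beta_{t_i} - \bar X^\alpha_{t_i}|^2] \le C\varepsilon^2$. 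The Lipschitz properties of $g$ and $f$ then finish the direction, up to the same $[T,t_{N_s}]$ correction handled above.

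For the regularity bound, the spatial Lipschitz part follows from a Gronwall argument applied to the Euler recursion: for any $\beta \in \mathcal{B}_h$, $\mathbb{\hat{E}}[\sup_i |\bar X^{\beta, t_n, x}_{t_i} - \bar X^{\beta, t_n, y}_{t_i}|^2] \le C|x-y|^2$, and hence $|\bar u_\Delta(t_n, x) - \bar u_\Delta(t_n, y)| \le C|x-y|$ after using the Lipschitz property of $g$, $f$ and sub-additivity of $\mathbb{\hat{E}}$. For the temporal 1/2-H\"older estimate, I use the DPP (\ref{u^bar DPP}) iterated between grid times $t_n \le t_m$ to obtain
\[
|\bar u_\Delta(t_n, x) - \bar u_\Delta(t_m, x)| \le \sup_{\beta \in \mathcal{B}_h} \mathbb{\hat{E}}_{t_n, x}^\beta\bigl[|\bar u_\Delta(t_m, \bar X^\beta_{t_m}) - \bar u_\Delta(t_m, x)|\bigr] + L(t_m - t_n),
\]
and then combine the spatial Lipschitz bound with the moment estimate $\mathbb{\hat{E}}[|\bar X^\beta_{t_m} - x|^2] \le C(t_m - t_n)$, the discrete analogue of Proposition 5.3.1 of \cite{P2010} obtained via Proposition \ref{Prop Ito's integral} and the boundedness of the coefficients. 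Since $\bar u_\Delta$ was extended as a step function, comparison of non-grid times $s, t$ with the closest grid points costs $C\Delta^{1/2}$ per endpoint, producing the $\Delta^{1/2}$ term.

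The main technical obstacle is ensuring the comparison arguments go through uniformly under the sublinear expectation: one must systematically use $\mathbb{\hat{E}}[X] - \mathbb{\hat{E}}[Y] \le \mathbb{\hat{E}}[X - Y] \le \mathbb{\hat{E}}[|X-Y|]$ in place of linear identities, and absorb the stochastic integral and quadratic-variation contributions via Proposition \ref{Prop Ito's integral} when iterating Gronwall. A secondary subtlety is the time-horizon mismatch between $\bar u_\Delta$ (running to $t_{N_s}$) and $\bar v_\Delta$ (running to $T$): the extra window has length at most $\varepsilon^2 + \Delta$, so the $L^2$ displacement of the Euler scheme over it is $O(\varepsilon)$ exactly in the natural regime $\Delta = O(\varepsilon^2)$ used in the shaking-the-coefficients method.
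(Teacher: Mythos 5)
Your argument is correct and is exactly the standard one the paper itself invokes: the paper gives no proof of this proposition, deferring instead to Lemma 2.2 of Krylov's paper on Peng's CLT and to Proposition 5.3.1 of Peng's book, and those references proceed precisely as you do — a Gronwall comparison of the shaken and unshaken Euler dynamics using the $O(\varepsilon)$ coefficient perturbation for the first bound, and the iterated DPP plus the one-step moment estimate $\mathbb{\hat{E}}[|\bar{X}_{t_m}-x|^{2}]\leq C(t_m-t_n)$ plus the step-function extension for the second. The only cosmetic slip is that $(a_i,0,0)\notin B$ since $\mu$ ranges over the open interval $(-1,0)$; taking any $\mu\in(-1,0)$ and $\lambda=0$ instead perturbs the coefficients by at most $L\varepsilon$ by (H1), which is absorbed into the $C\varepsilon$, and you correctly identify the one genuine subtlety (the trailing window $[T,t_{N_s}]$ of length at most $\varepsilon^{2}$).
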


Then we give some results about the mollified function $\bar{u}_{\Delta
}^{(\varepsilon)}$.

\begin{proposition}
\label{prop u^epsi-}Assume that \emph{(H1)-(H2)} hold. Then,\

\begin{description}
\item[(i)] there exists a constant $C\geq0$ such that for any $t\in
\lbrack0,S]$, $x\in \mathbb{R}^{m},$
\[
|\bar{u}_{\Delta}(t,x)-\bar{u}_{\Delta}^{(\varepsilon)}(t,x)|\leq
C\varepsilon \text{;}%
\]

\item[(ii)] the function $\bar{u}_{\Delta}^{(\varepsilon)}\in C_{0} ^{\infty
}([0,S]\times \mathbb{R}^{m})$ and
\[
\left \Vert \partial_{t}^{l}D_{x}^{k}\bar{u}_{\Delta}^{(\varepsilon
)}\right \Vert _{\infty}\leq C\varepsilon^{1-2l-k}\text{, \ for }k+l\geq1;
\]

\item[(iii)] $\bar{u}_{\Delta}^{(\varepsilon)}$ satisfies the following
super-dynamic programming principle: for any $n=0,\ldots,N-1$, $x\in
\mathbb{R}^{m}$,
\[
\bar{u}_{\Delta}^{(\varepsilon)}(t_{n},x)\geq \sup_{a\in A}\mathbb{\hat{E}%
}_{t_{n},x}^{a}\left[  \bar{u}_{\Delta}^{(\varepsilon)}(t_{n+1},\bar
{X}_{t_{n+1}})+f(t_{n},x,a)\Delta \right]  .
\]

\end{description}

\begin{proof}
Properties (i)-(ii) are immediate from the properties of mollifiers
(\ref{bound}). Next, we only prove (iii). In view of (\ref{u^bar DPP}) and
noting that $\bar{X}_{t_{n+1}-\varepsilon^{2}\mu}^{\beta,t_{n}-\varepsilon
^{2}\mu,x-\varepsilon \lambda}=\bar{X}_{t_{n+1}}^{a,t_{n},x}-\varepsilon
\lambda$, for each $\beta=(a,\mu,\lambda)\in B$, we obtain that for
$n=0,\ldots,N-1,$
\[
\bar{u}_{\Delta}(t_{n}-\varepsilon^{2}\mu,x-\varepsilon \lambda)\geq \sup_{a\in
A}\mathbb{\hat{E}}_{t_{n},x}^{a}\left[  \bar{u}_{\Delta}(t_{n+1}%
-\varepsilon^{2}\mu,\bar{X}_{t_{n+1}}-\varepsilon \lambda)+f(t_{n}%
,x,a)\Delta \right]  .
\]
Taking convolutions of both sides of the above equality with $\zeta
_{\varepsilon}$, from the convexity of $\sup_{a\in A}\mathbb{\hat{E}}[\cdot]$,
we obtain the desired result.
\end{proof}
\end{proposition}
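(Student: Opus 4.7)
My plan is to handle the three parts in order. Parts (i) and (ii) are standard consequences of the mollifier estimate (\ref{bound}) applied to $\bar{u}_\Delta$, using its regularity from Proposition \ref{prop u^-}. Part (iii) is the substantive step and requires a careful interplay between the shaken DPP and the sub-linearity of $\mathbb{\hat{E}}$.

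For (i), I would write
\[
\bar{u}_\Delta^{(\varepsilon)}(t,x) - \bar{u}_\Delta(t,x) = \int \bigl[\bar{u}_\Delta(t-s, x-y) - \bar{u}_\Delta(t,x)\bigr] \zeta_\varepsilon(s,y)\, ds\, dy
\]
and bound the integrand via the $1/2$-H\"older-in-time / Lipschitz-in-space estimate of Proposition \ref{prop u^-}, noting $|s|^{1/2} \leq \varepsilon$ and $|y| \leq \varepsilon$ on $\operatorname{supp}\zeta_\varepsilon$. This yields $C\varepsilon$ after absorbing the $\Delta^{1/2}$ term under the standard balancing assumption $\Delta^{1/2} \leq \varepsilon$. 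For (ii), for $k+l \geq 1$ I would differentiate under the integral, subtract $\bar{u}_\Delta(t,x)$ (using $\int \partial_t^l D_x^k \zeta_\varepsilon = 0$), and combine the H\"older/Lipschitz modulus with the scaling $\|\partial_t^l D_x^k \zeta_\varepsilon\|_1 = C\varepsilon^{-2l-k}$ to obtain the bound $\varepsilon^{1-2l-k}$.

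The substantive step is (iii). The key identity is $\bar{X}_{t_{n+1}-\varepsilon^2\mu}^{\beta, t_n - \varepsilon^2\mu, x - \varepsilon\lambda} = \bar{X}_{t_{n+1}}^{a, t_n, x} - \varepsilon\lambda$ for each $\beta = (a,\mu,\lambda) \in B$, which follows from a one-step unravelling of the Euler scheme together with the definition $b_\varepsilon(t,x,\beta) = b(t+\varepsilon^2\mu, x+\varepsilon\lambda, a)$ (and analogously for $\sigma, h, f$) and the stationarity of $G$-Brownian increments. Applying the shaken DPP (\ref{u^bar DPP}) at the shifted point $(t_n - \varepsilon^2\mu, x - \varepsilon\lambda)$ and restricting the supremum there to the fixed control $\beta = (a,\mu,\lambda)$ gives
\[
\bar{u}_\Delta(t_n - \varepsilon^2\mu, x - \varepsilon\lambda) \geq \mathbb{\hat{E}}_{t_n,x}^a \bigl[\bar{u}_\Delta(t_{n+1} - \varepsilon^2\mu, \bar{X}_{t_{n+1}}^{a,t_n,x} - \varepsilon\lambda)\bigr] + f(t_n, x, a)\Delta.
\]
Convolving both sides against $\zeta(\mu,\lambda)\, d\mu\, d\lambda$ (equivalently, against $\zeta_\varepsilon$ after the change of variables $\tau = \varepsilon^2\mu$, $y = \varepsilon\lambda$) turns the left-hand side into $\bar{u}_\Delta^{(\varepsilon)}(t_n, x)$. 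For the right-hand side I invoke the sub-linear Jensen-type inequality $\mathbb{\hat{E}}[\int X_\mu\, d\mu] \leq \int \mathbb{\hat{E}}[X_\mu]\, d\mu$ (a direct consequence of sub-additivity applied to Riemann sums with positive weights) to transfer the convolution inside the $G$-expectation, producing $\mathbb{\hat{E}}_{t_n,x}^a[\bar{u}_\Delta^{(\varepsilon)}(t_{n+1}, \bar{X}_{t_{n+1}}^{a,t_n,x})]$. Taking the supremum over $a \in A$ delivers (iii).

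The main obstacle is in (iii): ensuring that the Jensen-type inequality points in the direction that lets one pull the convolution inside the expectation (this requires sub-additivity, not super-additivity, and applies only to the right-hand side), and verifying that the Euler-flow shift identity is exact for arbitrary $\beta \in B$ so that no extra error terms accumulate before the final supremum over $a$ is taken.
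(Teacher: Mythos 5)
Your proof follows essentially the same route as the paper's: the same Euler-flow shift identity $\bar{X}_{t_{n+1}-\varepsilon^{2}\mu}^{\beta,t_{n}-\varepsilon^{2}\mu,x-\varepsilon\lambda}=\bar{X}_{t_{n+1}}^{a,t_{n},x}-\varepsilon\lambda$, the same convolution of the shifted shaken DPP against $\zeta_{\varepsilon}$, and the same sub-additivity/convexity step to pull the mollification inside $\sup_{a\in A}\mathbb{\hat{E}}[\cdot]$. Your more explicit treatment of (i)--(ii), including the remark that the extra $\Delta^{1/2}$ in the modulus of continuity of $\bar{u}_{\Delta}$ must be absorbed into $C\varepsilon$ (true under the eventual choice $\varepsilon=\Delta^{1/6}$ or $\Delta^{1/4}$), is sound and consistent with what the paper implicitly assumes.
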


\subsection{Discrete approximation scheme}

According to Theorem \ref{E repesent}, we rewrite the Euler scheme
(\ref{DP v^bar_h}) as%
\begin{equation}%
\begin{split}
&  \bar{v}_{\Delta}(T,x)=g(x),\\
&  \bar{v}_{\Delta}(t_{n},x)=\sup_{a\in A}\sup_{\theta_{\cdot}\in \Sigma
}\mathbb{E}_{t_{n},x}^{a,\theta}\left[  \bar{v}_{\Delta}(t_{n+1},\bar
{X}_{t_{n+1}})+f(t_{n},x,a)\Delta \right]  ,
\end{split}
\label{9}%
\end{equation}
for $n=N-1,\ldots,0$, where $\bar{X}_{\cdot}^{a,\theta,t_{n},x}$ satisfies
\[
\bar{X}_{t_{n+1}}=\bar{X}_{t_{n}}+b(t_{n},\bar{X}_{t_{n}},a)\Delta
+\sigma(t_{n},\bar{X}_{t_{n}},a)\Delta B^{\theta}+h(t_{n},\bar{X}_{t_{n}%
},a)\Delta \langle B\rangle^{\theta},
\]
with $\Delta B^{\theta}:=\int_{t_{n}}^{t_{n+1}}\theta_{s}dW_{s}$ and
$\Delta \langle B\rangle^{\theta}:=\int_{t_{n}}^{t_{n+1}}\theta_{s}^{2}ds$.

We now consider a general discrete-time approximation scheme to further
calculate the value function $\bar{v}_{\Delta}$. Let $\Theta$ be a collection
of $\mathbb{R}$-valued random variables, given perhaps on different
probability spaces such that $\mathbb{E}[\xi]=0,$ $\forall$ $\xi \in \Theta$
and
\[
\inf \limits_{\xi \in \Theta}\mathbb{E}[\xi^{2}]=\underline{\sigma}^{2}\text{,
\  \  \ }\sup \limits_{\xi \in \Theta}\mathbb{E}[\xi^{2}]=\bar{\sigma}^{2}.
\]
Let $\{ \xi_{n}\}_{n=0}^{\infty}\in \Theta$ be a sequence of i.i.d.
$\mathbb{R}$-valued random variables. For any $\alpha=(a_{0},\ldots
,a_{N-1})\in \mathcal{A}_{h}$, we denote by $\hat{X}_{\cdot}^{a,\xi,t_{n},x}$
the discrete approximation of the process $\bar{X}_{\cdot}^{a,\theta,t_{n},x}$
recursively defined by%
\[
\hat{X}_{t_{i+1}}=\hat{X}_{t_{i}}+b(t_{i},\hat{X}_{t_{i}},a)\Delta
+\sigma(t_{i},\hat{X}_{t_{i}},a)\sqrt{\Delta}\xi_{i}+h(t_{i},\hat{X}_{t_{i}%
},a)\Delta(\xi_{i})^{2},
\]
for $i=n,\ldots,N-1$, with $\hat{X}_{t_{n}}=x$. Then define the following
discrete-time approximation scheme:
\begin{equation}%
\begin{split}
&  \hat{v}_{\Delta}(T,x)=g(x),\\
&  \hat{v}_{\Delta}(t_{n},x)=\sup_{a\in A}\sup_{\xi \in \Theta}\mathbb{E}%
_{t_{n},x}^{a,\xi}\left[  \hat{v}_{\Delta}(t_{n+1},\hat{X}_{t_{n+1}}%
)+f(t_{n},x,a)\Delta \right]  .
\end{split}
\label{10}%
\end{equation}
Moreover, we derive that for $n=0,1,\ldots,N-1$ and $x\in \mathbb{R}^{m}$,%

\[
\hat{v}_{\Delta}(t_{n},x)=\sup_{\alpha \in \mathcal{A}_{h}}\sup_{\xi \in \Theta
}\mathbb{E}_{t_{n},x}^{\alpha,\xi}\bigg[g(\hat{X}_{T})+\sum_{i=n}^{N-1}%
f(t_{i},\hat{X}_{t_{i}},a_{i})\Delta \bigg].
\]

\begin{remark}
Indeed, the equation \eqref{10} is a general discrete scheme for the $G$-SOCP
\eqref{1}-\eqref{2}. By choosing the appropriate parameter set, we can derive
different kinds of numerical schemes.
\end{remark}

\begin{example}
Let $\Theta$ be a collection of $\mathbb{R}$-valued random variables, such
that for any $\xi \in \Theta$,
\[
P(\xi=p_{i})=\omega_{i}^{\sigma},\; \; \text{for }\sigma \in \{ \underline
{\sigma},\bar{\sigma}\},\;i=1,2,3,
\]
where%
\[
\left \{
\begin{array}
[c]{ll}%
p_{1}=-1, & \omega_{1}^{\sigma}=\sigma^{2}/2;\\
p_{2}=0, & \omega_{2}^{\sigma}=1-\sigma^{2};\\
p_{3}=1, & \omega_{3}^{\sigma}=\sigma^{2}/2.
\end{array}
\right.
\]
It is easy to verify that for any $\xi \in \Theta$, $k\in \mathbb{N}^{+}$,
$\mathbb{E}[\xi^{2k-1}]=0$,
\[
\inf \limits_{\xi \in \Theta}\mathbb{E}[\xi^{2k}]=\underline{\sigma}^{2}\text{,
\  \  \ }\sup \limits_{\xi \in \Theta}\mathbb{E}[\xi^{2k}]=\bar{\sigma}^{2}.
\]
From the scheme (\ref{10}), we can define the following discrete approximation
scheme
\begin{equation}%
\begin{split}
&  \hat{v}_{\Delta}(T,x)=g(x),\\
&  \hat{v}_{\Delta}(t_{n},x)=\sup_{a\in A}\sup_{\sigma \in \{ \underline{\sigma
},\bar{\sigma}\}}\left[  \sum \limits_{i=1}^{3}\omega_{i}^{\sigma}\hat
{v}_{\Delta}(t_{n+1},\zeta_{t_{n+1}}^{a,i})+f(t_{n},x,a)\Delta \right]  ,
\end{split}
\label{Trinomial}%
\end{equation}
for $n=N-1,\ldots,0$, where
\[
\zeta_{t_{n+1}}^{a,i}=x+b(t_{n},x,a)\Delta+\sigma(t_{n},x,a)\sqrt{\Delta}%
p_{i}+h(t_{n},x,a)\Delta(p_{i})^{2}.
\]

\end{example}

\begin{example}
Let $\Theta$ be a collection of $\mathbb{R}$-valued random variables, such
that for given integer $L\geq2$ and any $\xi \in \Theta,$
\[
P(\xi=p_{i}^{\sigma})=\omega_{i},\; \; \text{for }\sigma \in \{ \underline
{\sigma},\bar{\sigma}\},\;i=1,\ldots,L,
\]
where $\omega_{i}=\frac{A_{i}}{\sqrt{\pi}}$, $p_{i}^{\sigma}=\sigma \sqrt
{2}x_{i}$, and $\{(A_{i},x_{i})\}_{i=1}^{L}\ $are the weights and roots of the
Gauss-Hermite quadrature rule (see e.g.,\  \cite{AS1972}). Noting that the
Gauss-Hermite quadrature rule is exact for any polynomial of degree $2L-1$,
one can check that for any $\xi \in \Theta$, $k=1,2,\ldots,L$, $\mathbb{E}%
[\xi^{2k-1}]=0$,
\[
\inf \limits_{\xi \in \Theta}\mathbb{E}[\xi^{2k}]=\underline{\sigma}%
^{2k}(2k-1)!!\text{, \ }\sup \limits_{\xi \in \Theta}\mathbb{E}[\xi^{2k}%
]=\bar{\sigma}^{2k}(2k-1)!!.
\]
By means of the scheme (\ref{10}), we obtain the following discrete
approximation scheme
\begin{equation}%
\begin{split}
&  \hat{v}_{\Delta}(T,x)=g(x),\\
&  \hat{v}_{\Delta}(t_{n},x)=\sup_{a\in A}\sup_{\sigma \in \{ \underline{\sigma
},\bar{\sigma}\}}\left[  \sum \limits_{i=1}^{L}\omega_{i}\hat{v}_{\Delta
}(t_{n+1},\zeta_{t_{n+1}}^{\sigma,a,i})+f(t_{n},x,a)\Delta \right]  ,
\end{split}
\label{GH}%
\end{equation}
for $n=N-1,\ldots,0$, where
\[
\zeta_{t_{n+1}}^{\sigma,a,i}=x+b(t_{n},x,a)\Delta+\sigma(t_{n},x,a)\sqrt
{\Delta}p_{i}^{\sigma}+h(t_{n},x,a)\Delta(p_{i}^{\sigma})^{2}.
\]

\end{example}

Consider \textquotedblleft shaken" coefficients and use again the argument
defined above. Following the definitions of $b(t,x,\beta)$, $\sigma
(t,x,\beta)$, and $f(t,x,\beta)$, we define the \textquotedblleft shaken"
value function as follows:%

\[
\hat{u}_{\Delta}(t_{n},x)=\sup_{\beta \in \mathcal{B}_{h}}\sup_{\xi \in \Theta
}\mathbb{E}_{t_{n},x}^{\beta,\xi}\bigg[g(\hat{X}_{t_{N_{s}}})+\sum
_{i=n}^{N_{s}}f(t_{i},\hat{X}_{t_{i}},\beta_{i})\Delta \bigg],
\]
where $\hat{X}_{\cdot}^{\beta,\xi,t_{n},x}$ is recursively defined by
\[
\hat{X}_{t_{i+1}}=\hat{X}_{t_{i}}+b(t_{i},\hat{X}_{t_{i}},\beta_{i}%
)\Delta+\sigma(t_{i},\hat{X}_{t_{i}},\beta_{i})\sqrt{\Delta}\xi_{i}%
+h(t_{i},\hat{X}_{t_{i}},\beta_{i})\Delta(\xi_{i})^{2},
\]
for $i=n,\ldots,N_{s}-1$ with $\hat{X}_{t_{n}}=x$.

We also extend $\hat{v}_{\Delta}$ and $\hat{u}_{\Delta}\ $defined on the grid
points to the whole of $[0,T]$ and $[0,S]$, respectively. In the similar way
above, we give the following properties.

\begin{proposition}
\label{prop u^}Assume that \emph{(H1)-(H2)} hold. Then, there exists a
constant $C\geq0$ such that for any $t\in \lbrack0,T]$, $x\in \mathbb{R}^{m}$,
\[
|\hat{u}_{\Delta}(t,x)-\hat{v}_{\Delta}(t,x)|\leq C\varepsilon,
\]
and for any $t,s\in \lbrack0,S]$, $x,y\in \mathbb{R}^{m}$
\[
|\hat{u}_{\Delta}(t,x)-\hat{u}_{\Delta}(s,y)|\leq C(|t-s|^{\frac{1}{2}}%
+\Delta^{\frac{1}{2}}+|x-y|).
\]

\end{proposition}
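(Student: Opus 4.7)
The plan is to mirror the proof of Proposition \ref{prop u^-} (and of Lemma 2.2 of \cite{Krylov2020}), with the $G$-Brownian increments $(\Delta B,\Delta\langle B\rangle)$ replaced throughout by the discrete quantities $(\sqrt{\Delta}\xi_i,\Delta\xi_i^2)$. The key preliminary that lets this substitution go through is a pair of discrete moment bounds for the recursion defining $\hat{X}$: using (H1), the identity $\mathbb{E}[\xi_i]=0$, the bound $\sup_{\xi\in\Theta}\mathbb{E}[\xi^2]=\bar\sigma^2$, and the independence of $\{\xi_i\}$, a discrete Gronwall argument on the Euler recursion yields, uniformly in $\Delta$,
\[
\sup_{\beta\in\mathcal{B}_h,\ \xi\in\Theta}\mathbb{E}\bigl[|\hat{X}_{t_i}^{\beta,\xi,t_n,x}-x|^2\bigr]\le C(t_i-t_n),\qquad \sup_{\beta,\xi}\mathbb{E}\bigl[|\hat{X}_{t_i}^{\beta,\xi,t_n,x}-\hat{X}_{t_i}^{\beta,\xi,t_n,y}|^2\bigr]\le C|x-y|^2.
\]
These are the exact analogues of the estimates used in the $G$-setting and they are the only probabilistic inputs required below.

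For part (i), I would prove the two one-sided bounds separately. The inequality $\hat{v}_\Delta(t,x)-\hat{u}_\Delta(t,x)\le C\varepsilon$ follows by restricting the sup in $\hat{u}_\Delta$ to $\beta$ with $(\mu,\lambda)=(0,0)$: this reproduces the scheme for $\hat{v}_\Delta$ up to the running cost accumulated on the extra interval $[T,t_{N_s}]$ and the replacement of $g(\hat{X}_T)$ by $g(\hat{X}_{t_{N_s}})$; since $N_s\Delta-T\le\varepsilon^2$, both discrepancies are $O(\varepsilon^2)+O(\varepsilon)$ by the moment bound above and the Lipschitz continuity of $g$ and $f$. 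The converse inequality $\hat{u}_\Delta-\hat{v}_\Delta\le C\varepsilon$ uses the \emph{shaking} comparison: for any $\beta=(a,\mu,\lambda)\in\mathcal{B}_h$, assumption (H1) gives
\[
|b_\varepsilon(t,x,\beta)-b(t,x,a)|+|\sigma_\varepsilon(t,x,\beta)-\sigma(t,x,a)|+|h_\varepsilon(t,x,\beta)-h(t,x,a)|\le C\varepsilon,
\]
and the same for $f$. A one-step propagation of this perturbation along the Euler recursion, combined with the discrete stability bound, yields $\mathbb{E}[|\hat{X}^{\beta,\xi}-\hat{X}^{a,\xi}|^2]\le C\varepsilon^2$, so the two cost functionals differ by $O(\varepsilon)$; taking the sup over $\beta$ and $a$ (and absorbing the boundary discrepancies as before) closes the bound.

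For part (ii), the spatial Lipschitz estimate $|\hat{u}_\Delta(t,x)-\hat{u}_\Delta(t,y)|\le C|x-y|$ follows from the second moment bound above applied to the map $x\mapsto\hat{X}^{\beta,\xi,t_n,x}$, together with the Lipschitz continuity of $g$ and $f$ from (H2), after interchanging Lipschitz estimation with the sup. The temporal $1/2$-Hölder estimate is obtained from the DPP: for grid points $t_n<t_m$,
\[
\hat{u}_\Delta(t_n,x)=\sup_{\beta,\xi}\mathbb{E}_{t_n,x}^{\beta,\xi}\!\left[\hat{u}_\Delta(t_m,\hat{X}_{t_m})+\sum_{i=n}^{m-1}f(t_i,\hat{X}_{t_i},\beta_i)\Delta\right],
\]
so, applying the already-established spatial Lipschitz estimate to $\hat{u}_\Delta(t_m,\cdot)$ and the first moment bound $\mathbb{E}|\hat{X}_{t_m}-x|\le C(t_m-t_n)^{1/2}$, together with $|f|\le L$, gives $|\hat{u}_\Delta(t_n,x)-\hat{u}_\Delta(t_m,x)|\le C(t_m-t_n)^{1/2}$. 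Finally, the extra $\Delta^{1/2}$ term appears precisely because $\hat{u}_\Delta$ is extended by taking the value at the left grid-point on each interval $[t_n,t_{n+1})$, so for arbitrary $t,s\in[0,S]$ one loses at most one step of size $\Delta^{1/2}$ in the time-regularity.

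The main obstacle is book-keeping rather than a conceptual difficulty: one must check that all constants $C$ are genuinely independent of $\Delta$ and $N$, which requires a careful discrete Gronwall argument that exploits only $\mathbb{E}[\xi]=0$ and $\mathbb{E}[\xi^2]\le\bar\sigma^2$ (both uniform over $\Theta$), and does not appeal to higher moments of $\xi$. The second subtle point is the treatment of the horizon mismatch $[T,t_{N_s}]$ in part (i): one has to verify that the contribution from these $\lceil\varepsilon^2/\Delta\rceil$ extra steps is $O(\varepsilon)$ (not $O(\varepsilon^2/\Delta)$), which is exactly what the boundedness of $f$ and $g$'s Lipschitz continuity provide, and this is the only place where the particular choice $S=T+\varepsilon^2$ is used.
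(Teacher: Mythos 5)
Your plan follows exactly the route the paper itself indicates: the paper offers no written proof of Proposition \ref{prop u^}, saying only that it is obtained ``in the similar way'' as Proposition \ref{prop u^-}, which in turn defers to the proof of Lemma 2.2 of \cite{Krylov2020} together with moment estimates on the state process. Your reconstruction --- discrete moment and flow-stability bounds via a discrete Gronwall argument, the $O(\varepsilon)$ coefficient-perturbation comparison for the shaking in part (i), the accounting of the horizon mismatch on $[T,t_{N_s}]$ using $|t_{N_s}-T|\le\varepsilon^2$, the multi-step DPP combined with spatial Lipschitz continuity for the temporal $1/2$-H\"older estimate, and the extra $\Delta^{1/2}$ coming from the piecewise-constant extension in time --- is precisely the set of missing details, and each step is sound. (Minor point: $(\mu,\lambda)=(0,0)$ is not in $B$ since $\mu$ ranges over $(-1,0)$, but this is immaterial because for \emph{every} $\beta=(a,\mu,\lambda)$ the shaken coefficients differ from the unshaken ones by at most $2L\varepsilon$ by (H1), which gives both one-sided bounds at once.)

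There is one concrete flaw in your closing claim that the Gronwall argument ``exploits only $\mathbb{E}[\xi]=0$ and $\mathbb{E}[\xi^2]\le\bar\sigma^2$ and does not appeal to higher moments of $\xi$.'' When $h\not\equiv 0$ the one-step increment contains $h(t_i,\hat X_{t_i},\beta_i)\Delta\xi_i^2$; writing $\Delta\xi_i^2=\Delta\mathbb{E}[\xi_i^2]+\Delta(\xi_i^2-\mathbb{E}[\xi_i^2])$, the centred part is a martingale increment with one-step variance of order $\Delta^2\mathbb{E}[\xi_i^4]$, and the cross term with the $\sigma$-increment involves $\mathbb{E}[\xi_i^3]$. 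The martingale (diffusion) part of the recursion forces the Gronwall argument to be run in $L^2$ (an $L^1$ iteration picks up a factor $(1+C\sqrt{\Delta})^{N}$ that blows up), so your stated $L^2$ bounds $\mathbb{E}[|\hat X_{t_i}-x|^2]\le C(t_i-t_n)$ and $\mathbb{E}[|\hat X^{x}_{t_i}-\hat X^{y}_{t_i}|^2]\le C|x-y|^2$, as well as the perturbation bound $\mathbb{E}[|\hat X^{\beta,\xi}-\hat X^{a,\xi}|^2]\le C\varepsilon^2$, genuinely require $\sup_{\xi\in\Theta}\mathbb{E}[\xi^4]<\infty$ in general. (The first-moment increment bound $\mathbb{E}|\hat X_{t_m}-x|\le C(t_m-t_n)^{1/2}$ alone does survive with only second moments, since there the $h$-part can be estimated in $L^1$.) This gap is inherited from the paper, which states the proposition under (H1)--(H2) only but sets $h=0$ without loss of generality in the proof of Theorem \ref{main theorem} and works with $\Theta$'s possessing all moments in its examples; to prove the proposition exactly as stated for general $h$ you should either add a uniform fourth-moment hypothesis on $\Theta$ or restrict to $h=0$ as the paper implicitly does.
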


\begin{proposition}
\label{prop u^epsi^}Assume that \emph{(H1)-(H2)} hold. Then,\

\begin{description}
\item[(i)] there exists a constant $C\geq0$ such that for any $t\in
\lbrack0,S]$, $x\in \mathbb{R}^{m},$
\[
|\hat{u}_{\Delta}(t,x)-\hat{u}_{\Delta}^{(\varepsilon)}(t,x)|\leq
C\varepsilon;
\]

\item[(ii)] the function $\hat{u}_{\Delta}^{(\varepsilon)}\in C_{0}^{\infty
}([0,S]\times \mathbb{R}^{m})$ and
\[
\left \Vert \partial_{t}^{l}D_{x}^{k}\hat{u}_{\Delta}^{(\varepsilon
)}\right \Vert _{\infty}\leq C\varepsilon^{1-2l-k}\text{, \ for }k+l\geq1;
\]

\item[(iii)] $\hat{u}_{\Delta}^{(\varepsilon)}$ satisfies the following
super-dynamic programming principle: for any $n=0,\ldots,N-1$, $x\in
\mathbb{R}^{m}$,
\[
\hat{u}_{\Delta}^{(\varepsilon)}(t_{n},x)\geq \sup_{a\in A}\sup_{\xi \in \Theta
}\mathbb{E}_{t_{n},x}^{a,\xi}\left[  \hat{u}_{\Delta}^{(\varepsilon)}%
(t_{n+1},\hat{X}_{t_{n+1}})+f(t_{n},x,a)\Delta \right]  .
\]

\end{description}
\end{proposition}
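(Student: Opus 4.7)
The three statements mirror Proposition \ref{prop u^epsi-} with the outer sublinear operator $\sup_{a \in A}\hat{\mathbb{E}}^{a}[\cdot]$ replaced by $\sup_{a \in A}\sup_{\xi \in \Theta}\mathbb{E}^{a,\xi}[\cdot]$, the main simplification being that each $\mathbb{E}^{a,\xi}$ is a classical linear expectation, so Fubini replaces the Jensen-for-sublinear step used there. Parts (i) and (ii) follow immediately from the mollifier estimates (\ref{bound}) applied to $\hat{u}_\Delta$, combined with the $\tfrac{1}{2}$-H\"{o}lder-in-time and Lipschitz-in-space regularity already supplied by Proposition \ref{prop u^}; this is verbatim the argument for (i)--(ii) of Proposition \ref{prop u^epsi-}, so no new ingredients are needed.

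For (iii) I would first check that $\hat{u}_\Delta$ itself obeys the one-step DPP
\[
\hat{u}_\Delta(t_n,x)=\sup_{\beta\in B}\sup_{\xi\in\Theta}\mathbb{E}_{t_n,x}^{\beta,\xi}\bigl[\hat{u}_\Delta(t_{n+1},\hat{X}_{t_{n+1}})+f(t_n,x,\beta)\Delta\bigr],
\]
by the same tower-property-plus-piecewise-constant-controls argument that produces (\ref{DP v^bar_h}). Next, the discrete Euler recursion together with the construction of the shaken coefficients yields the translation identity $\hat{X}_{t_{n+1}-\varepsilon^{2}\mu}^{(a,\mu,\lambda),t_{n}-\varepsilon^{2}\mu,x-\varepsilon\lambda}=\hat{X}_{t_{n+1}}^{a,t_{n},x}-\varepsilon\lambda$, because $b_\varepsilon(t_n-\varepsilon^2\mu,x-\varepsilon\lambda,(a,\mu,\lambda))=b(t_n,x,a)$ (and analogously for $\sigma,h,f$) while the noise factors $\sqrt{\Delta}\xi_i$ and $\Delta\xi_i^{2}$ are the same on both sides. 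Substituting this into the DPP written at the shifted point $(t_n-\varepsilon^2\mu,x-\varepsilon\lambda)$ and restricting the outer supremum to controls of the form $(a,\mu,\lambda)$ with $(\mu,\lambda)$ held fixed gives the pointwise inequality
\[
\hat{u}_\Delta(t_n-\varepsilon^2\mu,x-\varepsilon\lambda)\geq\sup_{a\in A}\sup_{\xi\in\Theta}\mathbb{E}_{t_n,x}^{a,\xi}\bigl[\hat{u}_\Delta(t_{n+1}-\varepsilon^2\mu,\hat{X}_{t_{n+1}}-\varepsilon\lambda)+f(t_n,x,a)\Delta\bigr].
\]

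To conclude, I would convolve both sides against the mollifier $\zeta_\varepsilon(\mu,\lambda)$. The left-hand side becomes $\hat{u}_\Delta^{(\varepsilon)}(t_n,x)$ by definition. On the right-hand side I would use the elementary swap $\int\sup_{a,\xi}\Phi^{a,\xi}(\mu,\lambda)\,\zeta_\varepsilon\,d\mu d\lambda\geq\sup_{a,\xi}\int\Phi^{a,\xi}(\mu,\lambda)\,\zeta_\varepsilon\,d\mu d\lambda$ to pull the integral past the double supremum, followed by Fubini inside each fixed $\mathbb{E}^{a,\xi}$ (applicable since each is a genuine expectation on a classical probability space and the integrand is bounded by (H1)--(H2)); this recognizes the integral of $\hat{u}_\Delta(t_{n+1}-\varepsilon^2\mu,\hat{X}_{t_{n+1}}-\varepsilon\lambda)$ against $\zeta_\varepsilon$ as $\hat{u}_\Delta^{(\varepsilon)}(t_{n+1},\hat{X}_{t_{n+1}})$, producing the claimed super-DPP. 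The only delicate point---really a book-keeping issue rather than a genuine obstacle---is to keep the directions of the two swaps straight; both go the right way by the trivial pointwise bound $\sup_{a,\xi}\Phi^{a,\xi}\geq\Phi^{a,\xi}$, so no sublinear-Jensen manipulation as in Proposition \ref{prop u^epsi-} is needed here.
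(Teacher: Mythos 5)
Your proposal is correct and follows essentially the same route as the paper, which proves this proposition only by reference to the argument for Proposition \ref{prop u^epsi-}: (i)--(ii) directly from the mollifier bounds (\ref{bound}) together with the regularity in Proposition \ref{prop u^}, and (iii) via the translation identity for the shaken discrete process, restriction of the supremum, and convolution with $\zeta_{\varepsilon}$ using the convexity (i.e.\ $\int\sup\geq\sup\int$) of the outer operator. Your additional remark that each $\mathbb{E}^{a,\xi}$ is a classical linear expectation, so only Fubini is needed inside, is a harmless and accurate refinement of the paper's sketch.
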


\section{Convergence rate of the discrete approximation scheme}

In this section, we derive an error bound for the convergence of the solution
of our discrete approximation scheme (\ref{10}). In the sequel, $C$ represents
a generic constant which does not depend on the time partition and may be
different from line to line. We also need the following assumption:

\begin{description}
\item[(H3)] For any $\xi \in \Theta$, $\mathbb{E}[|\xi|^{3}]<\infty$.
\end{description}

The main result in this paper is the following:

\begin{theorem}
\label{main theorem}Assume \emph{(H1)-(H3)} hold. Let $v$ and $\hat{v}
_{\Delta}$ be the solution of (\ref{2}) and (\ref{10}), respectively. Then,

\begin{description}
\item[(i)] for any $x\in \mathbb{R}^{m}$, $n=0,1,\ldots,N$,
\[
|v(t_{n},x)-\hat{v}_{\Delta}(t_{n},x)|\leq C\Delta^{\frac{1}{6}};
\]

\item[(ii)] moreover, if $\mathbb{E}[\xi^{3}]=0$ for any $\xi \in \Theta$ and
$\sup \limits_{\xi \in \Theta}\mathbb{E}[|\xi|^{4}]<\infty$, then for any
$x\in \mathbb{R}^{m}$, $n=0,1,\ldots,N$,
\[
|v(t_{n},x)-\hat{v}_{\Delta}(t_{n},x)|\leq C\Delta^{\frac{1}{4}}.
\]

\end{description}
\end{theorem}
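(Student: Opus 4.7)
The plan is to bound $|v-\hat v_\Delta|$ by comparing $\hat v_\Delta$ directly with the viscosity solution $v$ of the HJB equation, through the smooth auxiliary function $\hat u_\Delta^{(\varepsilon)}$ (which approximates $\hat v_\Delta$ within $C\varepsilon$ by Propositions \ref{prop u^} and \ref{prop u^epsi^}(i)) for the upper bound, and a mirror-image shaken-mollified version of $v$ (built as in the proof of Theorem \ref{lemma1}) for the lower bound. Each half follows the Krylov--Barles--Jakobsen template: a local consistency estimate coming from the super-DPP and a Taylor expansion elevates $\hat u_\Delta^{(\varepsilon)}$ (respectively its mirror) to a smooth approximate sub- (resp.\ super-) solution of the HJB equation, after which the comparison principle for HJB together with the boundary estimate $|\hat u_\Delta^{(\varepsilon)}(T,\cdot)-g|\le C\varepsilon$ converts it into a bound on $\hat v_\Delta-v$ (resp.\ $v-\hat v_\Delta$).

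For the upper bound I would start from Proposition \ref{prop u^epsi^}(iii),
\[
\hat u_\Delta^{(\varepsilon)}(t_n,x) \ge \sup_{a\in A}\sup_{\xi\in\Theta}\mathbb E_{t_n,x}^{a,\xi}\!\left[\hat u_\Delta^{(\varepsilon)}(t_{n+1},\hat X_{t_{n+1}}) + f(t_n,x,a)\Delta\right],
\]
and Taylor-expand the right-hand side around $(t_n,x)$ to second order in $x$ and first order in $t$, using the derivative bounds of Proposition \ref{prop u^epsi^}(ii) together with the moment structure $\mathbb E[\xi]=0$ and $\sup_{\xi\in\Theta}\tfrac12 H\,\mathbb E[\xi^2]=G(H)$ with $H=\langle D_x^2\hat u_\Delta^{(\varepsilon)}\sigma,\sigma\rangle+2\langle D_x\hat u_\Delta^{(\varepsilon)},h\rangle$. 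This identifies the leading order as $\Delta\cdot[\partial_t\hat u_\Delta^{(\varepsilon)}+\sup_a F]$, so the zeroth-order terms cancel and a remainder is exposed. Under (H3) the third-order Taylor remainder is bounded by $C\|D_x^3\hat u_\Delta^{(\varepsilon)}\|_\infty\,\mathbb E[|\Delta\hat X|^3]\le C\varepsilon^{-2}\Delta^{3/2}$ per step, and dividing by $\Delta$ shows that $\hat u_\Delta^{(\varepsilon)}$ is a classical HJB-subsolution with pointwise error $C\varepsilon^{-2}\Delta^{1/2}$. The comparison principle then yields $\hat u_\Delta^{(\varepsilon)}-v\le C(\varepsilon+\varepsilon^{-2}\Delta^{1/2})$, and optimising $\varepsilon\sim\Delta^{1/6}$ delivers $\hat v_\Delta-v\le C\Delta^{1/6}$. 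The reverse inequality $v-\hat v_\Delta\le C\Delta^{1/6}$ is proved symmetrically with a smooth shaken-mollified supersolution of the HJB playing the role of $\hat u_\Delta^{(\varepsilon)}$: its approximate sub-DPP against the $\hat X$-scheme produces the same per-step defect.

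For part (ii), the stronger moment hypotheses $\mathbb E[\xi^3]=0$ and $\sup_\xi\mathbb E[|\xi|^4]<\infty$ allow me to push the Taylor expansion in $x$ to fourth order. The dominant cubic contribution is proportional to $\mathbb E[\xi^3]D_x^3\hat u_\Delta^{(\varepsilon)}\sigma^{\otimes 3}\Delta^{3/2}$ and vanishes, all surviving mixed cross-terms in $(b\Delta+\sigma\sqrt\Delta\,\xi+h\Delta\xi^2)^{\otimes 3}$ are already $O(\Delta^2)$, and the fourth-order remainder is bounded by $C\|D_x^4\hat u_\Delta^{(\varepsilon)}\|_\infty\mathbb E[|\Delta\hat X|^4]\le C\varepsilon^{-3}\Delta^2$. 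The consistency estimate therefore sharpens to $C\varepsilon^{-3}\Delta$, and the optimal choice $\varepsilon\sim\Delta^{1/4}$ yields the refined rate $\Delta^{1/4}$ in both directions.

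The main technical hurdle will be the consistency identification, in particular verifying rigorously that $\sup_{\xi\in\Theta}\tfrac12 H\,\mathbb E[\xi^2]$ exactly reproduces the $G(H)$ nonlinearity appearing in the HJB equation via the defining properties of $\Theta$, and handling the joint $\sup_a\sup_\xi$ without losing the sign structure needed to invoke the comparison principle. Keeping careful track of the $\varepsilon^{-k}$ derivative bounds so that the per-step defect can be safely summed over $N=T/\Delta$ steps, and then optimising the resulting expression in $\varepsilon$, also requires attention; an additional delicate point in part (ii) is identifying exactly which cubic cross-terms survive the vanishing of $\mathbb E[\xi^3]$ and confirming that each of them is indeed of order $\Delta^2$ after multiplication by the Lipschitz coefficients $b$, $\sigma$, $h$.
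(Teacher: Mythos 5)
Your overall template (shaking, mollification, a super-DPP turned into a consistency estimate by Taylor expansion, then optimization over $\varepsilon$) is the right one, and your exponent bookkeeping agrees with the paper's: the dominant per-step defect $C\varepsilon^{-2}\Delta^{3/2}$ in case (i) and $C\varepsilon^{-3}\Delta^{2}$ in case (ii) are exactly the leading terms of the remainders $\bar{H}_{\Delta}^{\varepsilon}$ in the paper, and the choices $\varepsilon\sim\Delta^{1/6}$, $\varepsilon\sim\Delta^{1/4}$ match. However, there is a genuine directional error in your upper-bound step. The super-DPP of Proposition \ref{prop u^epsi^}(iii) reads $\hat{u}_{\Delta}^{(\varepsilon)}(t_n,x)\geq\sup_a\sup_\xi\mathbb{E}[\cdots]$; after Taylor expansion this gives $\partial_t\hat{u}_{\Delta}^{(\varepsilon)}+\sup_a F\leq\text{defect}$, which makes $\hat{u}_{\Delta}^{(\varepsilon)}$ an approximate \emph{super}solution of the terminal-value HJB problem. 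Comparison then yields $\hat{u}_{\Delta}^{(\varepsilon)}\geq v-C(\cdots)$, i.e.\ a bound on $v-\hat{v}_{\Delta}$, not on $\hat{v}_{\Delta}-v$ as you claim. You cannot repair this by producing a subsolution from $\hat{u}_{\Delta}$: mollification preserves only the ``$\geq$'' direction of the DPP, precisely because $\sup_a\sup_\xi\mathbb{E}[\cdot]$ is convex (this is the well-known asymmetry in the Krylov/Barles--Jakobsen method). The fix is to swap the roles of your two constructions: $\hat{u}_{\Delta}^{(\varepsilon)}$ handles $v-\hat{v}_{\Delta}$, while the shaken-mollified version of $v$, shown to be an approximate supersolution of the discrete scheme (\ref{10}) and propagated by a discrete induction using the monotonicity of the scheme, handles $\hat{v}_{\Delta}-v$. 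Both ingredients are present in your plan, so the argument is recoverable, but as written the chain of inequalities in your first paragraph does not close.

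Beyond that, your route is genuinely different from the paper's in one respect worth noting: the paper never invokes a viscosity comparison principle for the $G$-HJB equation. It compares $\hat{v}_{\Delta}$ with the intermediate Euler/piecewise-constant-policy value $\bar{v}_{\Delta}$ by two purely discrete grid inductions (using $\bar{u}_{\Delta}^{(\varepsilon)}$ as an approximate supersolution of the scheme (\ref{10}) and $\hat{u}_{\Delta}^{(\varepsilon)}$ as an approximate supersolution of the scheme (\ref{9}), linked through the identity $\sup_{\xi\in\Theta}\mathbb{E}[L_a^{\xi}u]=\sup_{\theta\in[\underline{\sigma},\bar{\sigma}]}L_a^{\theta}u$), and only then appeals to the separately proven estimate (\ref{Euler approxi}), $|v-\bar{v}_{\Delta}|\leq C\Delta^{1/4}$, which packages Theorem \ref{lemma1} and the Euler error. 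Your direct comparison with $v$ is viable but requires a comparison theorem between the viscosity solution $v$ and classical approximate semi-solutions of the fully nonlinear HJB equation, a tool the paper deliberately avoids; it also requires the regularity of $v$ (Lipschitz in $x$, $1/2$-H\"older in $t$) to carry out the shaking of $v$ for the other direction. If you supply that comparison result, your route buys a more self-contained treatment of the discrete-to-continuous gap at the cost of extra PDE machinery; the paper's route keeps everything at the level of discrete inductions but must route through $\bar{v}_{\Delta}$ and Theorem \ref{lemma1}.
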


\begin{proof}
Without loss of generality, we suppose that $m=1$ and $h=0$. (i) Step 1. Lower
bound on $v-\hat{v}_{\Delta}$. From Proposition \ref{prop u^epsi-} (iii) and
Theorem \ref{E repesent}, we have for $n=0,\ldots,N-1$,%
\begin{equation}
\bar{u}_{\Delta}^{(\varepsilon)}(t_{n},x)\geq \sup_{a\in A}\sup_{\theta
\in \lbrack \underline{\sigma},\bar{\sigma}]}\mathbb{E}_{t_{n},x}^{a,\theta
}\left[  \bar{u}_{\Delta}^{(\varepsilon)}(t_{n+1},\bar{X}_{t_{n+1}}%
)+f(t_{n},x,a)\Delta \right]  . \label{3}%
\end{equation}
Denote%
\[
L_{a}^{\lambda}u(t,x)=\partial_{t}u(t,x)+b(t,x,a)\partial_{x}u(t,x)+\frac
{1}{2}\lambda^{2}\sigma^{2}(t,x,a)\partial_{x}^{2}u(t,x).
\]
Using Taylor's formula and Proposition \ref{Prop Ito's integral}, we have
\begin{equation}
\mathbb{E}_{t_{n},x}^{a,\theta}\left[  \bar{u}_{\Delta}^{(\varepsilon
)}(t_{n+1},\bar{X}_{t_{n+1}})\right]  =\bar{u}_{\Delta}^{(\varepsilon)}%
(t_{n},x)+L_{a}^{\theta}\bar{u}_{\Delta}^{(\varepsilon)}(t_{n},x)\Delta
+\bar{R}_{\Delta}^{\varepsilon}, \label{4}%
\end{equation}
where%
\begin{align*}
\bar{R}_{\Delta}^{\varepsilon}  &  =\frac{1}{2}\Delta^{2}\left[  \partial
_{t}^{2}\bar{u}_{\Delta}^{(\varepsilon)}(t_{n},x)+\partial_{x}^{2}\bar
{u}_{\Delta}^{(\varepsilon)}(t_{n},x)b^{2}(t_{n},x,a)+2\partial_{tx}^{2}%
\bar{u}_{\Delta}^{(\varepsilon)}(t_{n},x)b(t_{n},x,a)\right] \\
&  \text{ \  \ }+\frac{1}{3!}\mathbb{E}\left[  (\Delta \partial_{t}+\Delta
\bar{X}\partial_{x})^{3}\bar{u}_{\Delta}^{(\varepsilon)}(t_{n}+\tau
\Delta,x+\tau \Delta \bar{X})\right]  ,
\end{align*}
with $\Delta \bar{X}:=\bar{X}_{t_{n+1}}^{a,\theta,t_{n},x}-x$ and $\tau
\in \lbrack0,1]$. By Proposition \ref{prop u^epsi-} (ii), one can check that%
\begin{equation}
|\bar{R}_{\Delta}^{\varepsilon}|\leq C\sum \limits_{i=0}^{3}\varepsilon
^{-2-i}\Delta^{\frac{i+3}{2}}. \label{5}%
\end{equation}
Together with (\ref{3}), (\ref{4}), and (\ref{5}), we obtain that for
$n=0,\ldots,N-1$,
\begin{equation}
\sup_{a\in A}\sup_{\theta \in \lbrack \underline{\sigma},\bar{\sigma}]}\left[
L_{a}^{\theta}\bar{u}_{\Delta}^{(\varepsilon)}(t_{n},x)+f(t_{n},x,a)\right]
\leq C\sum \limits_{i=0}^{3}\varepsilon^{-2-i}\Delta^{\frac{i+1}{2}}. \label{6}%
\end{equation}

We claim that if $1\leq k\leq N$ and $\eta:=\sup \limits_{x\in \mathbb{R}}%
[\hat{v}_{\Delta}(t_{k},x)-\bar{u}_{\Delta}^{(\varepsilon)}(t_{k},x)]$, then
for any $x\in \mathbb{R}$, $n\leq k$,
\begin{equation}
\hat{v}_{\Delta}(t_{n},x)\leq \bar{u}_{\Delta}^{(\varepsilon)}(t_{n}%
,x)+\eta+\bar{I}_{\Delta}^{\varepsilon}(t_{k}-t_{n}), \label{assert}%
\end{equation}
where%
\[
\bar{I}_{\Delta}^{\varepsilon}=C\sum \limits_{i=0}^{3}\varepsilon^{-2-i}%
\Delta^{\frac{i+1}{2}}.
\]
Taking $k=N$ in (\ref{assert}) and using the following estimates that by
Propositions \ref{prop u^-}-\ref{prop u^epsi-}
\[%
\begin{array}
[c]{l}%
|\bar{u}_{\Delta}^{(\varepsilon)}(T,x)-\bar{u}_{\Delta}(T,x)|+|\bar{u}%
_{\Delta}(T,x)-g(x)|\leq C(|S-T|^{1/2}+\varepsilon)\leq C\varepsilon,\\
|\bar{u}_{\Delta}^{(\varepsilon)}(t_{n},x)-\bar{u}_{\Delta}(t_{n},x)|+|\bar
{u}_{\Delta}(t_{n},x)-\bar{v}_{\Delta}(t_{n},x)|\leq C\varepsilon,
\end{array}
\]
we can deduce that for any $x\in \mathbb{R}$, $n=0,\ldots,N$,
\begin{equation}
\hat{v}_{\Delta}(t_{n},x)\leq \bar{v}_{\Delta}(t_{n},x)+\bar{I}_{\Delta
}^{\varepsilon}(T-t_{n})+C\varepsilon \leq \bar{v}_{\Delta}(t_{n},x)+\bar
{J}_{\Delta}^{\varepsilon}, \label{lower bound}%
\end{equation}
where
\[
\bar{J}_{\Delta}^{\varepsilon}=C(\varepsilon+\sum \limits_{i=0}^{3}%
\varepsilon^{-2-i}\Delta^{\frac{i+1}{2}}).
\]

Now we prove the assertion (\ref{assert}) by induction. For $n=k$, the
inequality (\ref{assert}) obviously holds. Assume that for $1\leq n\leq k$,
the assertion (\ref{assert}) holds.
Then
\begin{align}
\hat{v}_{\Delta}(t_{n-1},x)  &  =\sup_{a\in A}\sup_{\xi \in \Theta}%
\mathbb{E}_{t_{n-1},x}^{a,\xi}\left[  \hat{v}_{\Delta}(t_{n},\hat{X}_{t_{n}%
})+f(t_{n-1},x,a)\Delta \right] \label{7}\\
&  \leq \sup_{a\in A}\sup_{\xi \in \Theta}\mathbb{E}_{t_{n-1},x}^{a,\xi}\left[
\bar{u}_{\Delta}^{(\varepsilon)}(t_{n},\hat{X}_{t_{n}})+f(t_{n-1}%
,x,a)\Delta \right]  +\eta+\bar{I}_{\Delta}^{\varepsilon}(t_{k}-t_{n}%
).\nonumber
\end{align}
By Taylor's formula, for any $\xi \in \Theta$, we can deduce
\begin{equation}
\mathbb{E}_{t_{n-1},x}^{a,\xi}\left[  \bar{u}_{\Delta}^{(\varepsilon)}%
(t_{n},\hat{X}_{t_{n}})\right]  =\bar{u}_{\Delta}^{(\varepsilon)}%
(t_{n-1},x)+\mathbb{E}[L_{a}^{\xi}\bar{u}_{\Delta}^{(\varepsilon)}%
(t_{n-1},x)\Delta]+\bar{H}_{\Delta}^{\varepsilon}, \label{taylor}%
\end{equation}
where%
\begin{align*}
\bar{H}_{\Delta}^{\varepsilon}  &  =\Delta^{2}\left[  \partial_{tx}^{2}\bar
{u}_{\Delta}^{(\varepsilon)}(t_{n-1},x)b(t_{n-1},x,a)\right] \\
&  \text{ \  \ }+\frac{1}{2}\Delta^{2}\left[  \partial_{t}^{2}\bar{u}_{\Delta
}^{(\varepsilon)}(t_{n-1},x)+\partial_{x}^{2}\bar{u}_{\Delta}^{(\varepsilon
)}(t_{n-1},x)b^{2}(t_{n-1},x,a)\right] \\
&  \text{ \  \ }+\frac{1}{3!}\mathbb{E}\left[  (\Delta \partial_{t}+\Delta
\hat{X}\partial_{x})^{3}\bar{u}_{\Delta}^{(\varepsilon)}(t_{n-1}+\tau
\Delta,x+\tau \Delta \hat{X})\right]  ,
\end{align*}
with $\Delta \hat{X}:=\hat{X}_{t_{n}}^{a,\xi,t_{n-1},x}-x$ and $\tau \in(0,1)$.
Under the assumption (H3), from the regularity of $\partial_{t}^{l}D_{x}%
^{k}\bar{u}_{\Delta}^{(\varepsilon)}$ given in Proposition \ref{prop u^epsi-}
(ii), it follows that
\[
|\bar{H}_{\Delta}^{\varepsilon}|\leq C\sum \limits_{i=0}^{3}\varepsilon
^{-2-i}\Delta^{\frac{i+3}{2}}.
\]
Together with (\ref{6}), (\ref{7}), and (\ref{taylor}), we can derive that
\begin{align*}
\hat{v}_{\Delta}(t_{n-1},x)  &  \leq \bar{u}_{\Delta}^{(\varepsilon)}
(t_{n-1},x)+\eta+\bar{I}_{\Delta}^{\varepsilon}(t_{k}-t_{n})+2|\bar{H}%
_{\Delta}^{\varepsilon}|\\
&  \leq \bar{u}_{\Delta}^{(\varepsilon)}(t_{n-1},x)+\eta+\bar{I}_{\Delta
}^{\varepsilon}(t_{k}-t_{n-1}),
\end{align*}
where we have used the fact that
\[
\sup_{\xi \in \Theta}\mathbb{E}\left[  L_{a}^{\xi}\bar{u}_{\Delta}%
^{(\varepsilon)}(t_{n-1},x)\right]  =\sup_{\theta \in \lbrack \underline{\sigma
},\bar{\sigma}]}L_{a}^{\theta}\bar{u}_{\Delta}^{(\varepsilon)}(t_{n-1},x).
\]
By the principle of induction the assertion is true for all $0\leq n\leq k$
and $x\in \mathbb{R}$.

Step 2. Upper bound on $v(t,x)-\hat{v}_{\Delta}(t,x)$. From Proposition
\ref{prop u^epsi^} (iii), by a similar analysis as step 1, we obtain that for
$n=0,\ldots,N-1$,
\begin{equation}
\sup_{\theta_{\cdot}\in \Sigma}\mathbb{E}\left[  L_{a}^{\theta}\hat{u}_{\Delta
}^{(\varepsilon)}(t_{n},x)+f(t_{n},x,a)\right]  =\sup_{\xi \in \Theta}%
\mathbb{E}\left[  L_{a}^{\xi}\hat{u}_{\Delta}^{(\varepsilon)}(t_{n}%
,x)+f(t_{n},x,a)\right]  \leq C\sum \limits_{i=0}^{3}\varepsilon^{-2-i}%
\Delta^{\frac{i+1}{2}}. \label{13}%
\end{equation}
Similarly, by induction, it follows from (\ref{13}) that if $1\leq k\leq N$
and $\hat{\eta}:=\sup \limits_{x\in \mathbb{R}}[\bar{v}_{\Delta}(t_{k}%
,x)-\hat{u}_{\Delta}^{(\varepsilon)}(t_{k},x)]$, then for any $x\in \mathbb{R}%
$, $n\leq k$,%
\begin{equation}
\bar{v}_{\Delta}(t_{n},x)\leq \hat{u}_{\Delta}^{(\varepsilon)}(t_{n}%
,x)+\hat{\eta}+\hat{I}_{\Delta}^{\varepsilon}(t_{k}-t_{n}), \label{14}%
\end{equation}
where $\hat{I}_{\Delta}^{\varepsilon}=C\sum \limits_{i=0}^{3}\varepsilon
^{-2-i}\Delta^{\frac{i+1}{2}}$. Taking $k=N$ in (\ref{14}), by Propositions
\ref{prop u^}-\ref{prop u^epsi^}, one knows that%
\[%
\begin{array}
[c]{l}%
|\hat{u}_{\Delta}^{(\varepsilon)}(T,x)-\hat{u}_{\Delta}(T,x)|+|\hat{u}%
_{\Delta}(T,x)-g(x)|\leq C(|S-T|^{1/2}+\varepsilon)\leq C\varepsilon,\\
|\hat{u}_{\Delta}^{(\varepsilon)}(t_{n},x)-\hat{u}_{\Delta}(t_{n},x)|+|\hat
{u}_{\Delta}(t_{n},x)-\hat{v}_{\Delta}(t_{n},x)|\leq C\varepsilon.
\end{array}
\]
Thus, we have for any $n=0,\ldots,N$, $x\in \mathbb{R}$,
\begin{equation}
\bar{v}_{\Delta}(t_{n},x)\leq \hat{v}_{\Delta}(t_{n},x)+C(\varepsilon
+\sum \limits_{i=0}^{3}\varepsilon^{-2-i}\Delta^{\frac{i+1}{2}} ).
\label{upper bound}%
\end{equation}

Step 3. Conclusion. Together with (\ref{lower bound}) and (\ref{upper bound}),
by taking $\varepsilon=\Delta^{1/6}$, then we obtain that for any
$n=0,\ldots,N$, $x\in \mathbb{R}$,
\begin{equation}
|\bar{v}_{\Delta}(t_{n},x)-\hat{v}_{\Delta}(t_{n},x)|\leq C\Delta^{\frac{1}%
{6}}; \label{17}%
\end{equation}
Combining (\ref{Euler approxi}) and (\ref{17}), we conclude (i).

(ii) We follow the proof of (i) and take more terms in Taylor's formula. Then
the remainder $\bar{R}_{\Delta}^{\varepsilon}$ in (\ref{4}) takes the form
\[%
\begin{split}
\bar{R}_{\Delta}^{\varepsilon}  &  =\frac{1}{2}\Delta^{2}\left[  \partial
_{t}^{2}\bar{u}_{\Delta}^{(\varepsilon)}(t_{n},x)+\partial_{x}^{2}\bar
{u}_{\Delta}^{(\varepsilon)}(t_{n},x)b^{2}(t_{n},x,a)+2\partial_{tx}^{2}%
\bar{u}_{\Delta}^{(\varepsilon)}(t_{n},x)b(t_{n},x,a)\right] \\
&  \text{ \  \ }+\frac{1}{2}\Delta^{2}\left[  \partial_{x}^{3}\bar{u}_{\Delta
}^{(\varepsilon)}(t_{n},x)b(t_{n},x,a)\sigma^{2}(t_{n},x,a)+\partial_{txx}%
^{3}\bar{u}_{\Delta}^{(\varepsilon)}(t_{n},x)\sigma^{2}(t_{n},x,a)\right]
\mathbb{\theta}^{2}\\
&  \text{ \  \ }+\frac{1}{2}\Delta^{3}\left[  \partial_{ttx}^{3}\bar{u}%
_{\Delta}^{(\varepsilon)}(t_{n},x)b(t_{n},x,a)+\partial_{txx}^{3}\bar
{u}_{\Delta}^{(\varepsilon)}(t_{n},x)b^{2}(t_{n},x,a)\right] \\
&  \text{ \  \ }+\frac{1}{6}\Delta^{3}\left[  \partial_{t}^{3}\bar{u}_{\Delta
}^{(\varepsilon)}(t_{n},x)+\partial_{x}^{3}\bar{u}_{\Delta}^{(\varepsilon
)}(t_{n},x)b^{3}(t_{n},x,a)\right] \\
&  \text{ \  \ }+\frac{1}{4!}\mathbb{E}\left[  (\Delta \partial_{t}+\Delta
\bar{X}\partial_{x})^{4}\bar{u}_{\Delta}^{(\varepsilon)}(t_{n}+\tau
\Delta,x+\tau \Delta \bar{X})\right]  ,
\end{split}
\label{19}%
\]
with $\Delta \bar{X}=\bar{X}_{t_{n+1}}^{a,\theta,t_{n},x}-x$ and $\tau
\in \lbrack0,1]$. In addition, take again more terms in Taylor's formula, then
the remainder $\bar{H}_{\Delta}^{\varepsilon}$ in (\ref{taylor}) can be
expressed as
\[%
\begin{split}
\bar{H}_{\Delta}^{\varepsilon}  &  =\Delta^{2}\left[  \partial_{tx}^{2}\bar
{u}_{\Delta}^{(\varepsilon)}(t_{n-1},x)b(t_{n-1},x,a)\right] \\
&  \text{ \  \ }+\frac{1}{2}\Delta^{2}\left[  \partial_{t}^{2}\bar{u}_{\Delta
}^{(\varepsilon)}(t_{n-1},x)+\partial_{x}^{2}\bar{u}_{\Delta}^{(\varepsilon
)}(t_{n-1},x)b^{2}(t_{n-1},x,a)\right] \\
&  \text{ \  \ }+\frac{1}{2}\Delta^{2}\left[  \partial_{txx}^{3}\bar{u}%
_{\Delta}^{(\varepsilon)}(t_{n-1},x)\sigma^{2}(t_{n-1},x,a)\mathbb{E}[\xi
^{2}]\right] \\
&  \text{ \  \ }+\frac{1}{2}\Delta^{2}\left[  \partial_{x}^{3}\bar{u}_{\Delta
}^{(\varepsilon)}(t_{n-1},x)b(t_{n-1},x,a)\sigma^{2}(t_{n-1},x,a)\mathbb{E}%
[\xi^{2}]\right] \\
&  \text{ \  \ }+\frac{1}{2}\Delta^{3}\left[  \partial_{ttx}^{3}\bar{u}%
_{\Delta}^{(\varepsilon)}(t_{n-1},x)b(t_{n-1},x,a)+\partial_{txx}^{3}\bar
{u}_{\Delta}^{(\varepsilon)}(t_{n-1},x)b^{2}(t_{n-1},x,a)\right] \\
&  \text{ \  \ }+\frac{1}{6}\Delta^{3}\left[  \partial_{t}^{3}\bar{u}_{\Delta
}^{(\varepsilon)}(t_{n-1},x)+\partial_{x}^{3}\bar{u}_{\Delta}^{(\varepsilon
)}(t_{n-1},x)b^{3}(t_{n-1},x,a)\right] \\
&  \text{ \  \ }+\frac{1}{6}\Delta^{\frac{3}{2}}\left[  \partial_{x}^{3}\bar
{u}_{\Delta}^{(\varepsilon)}(t_{n-1},x)\sigma^{3}(t_{n-1},x,a)\mathbb{E}%
[\xi^{3}]\right] \\
&  \text{ \  \ }+\frac{1}{4!}\mathbb{E}\left[  (\Delta \partial_{t}+\Delta
\hat{X}\partial_{x})^{4}\bar{u}_{\Delta}^{(\varepsilon)}(t_{n-1}+\tau
\Delta,x+\tau \Delta \hat{X})\right]  ,
\end{split}
\label{20}%
\]
where $\Delta \hat{X}=\hat{X}_{t_{n}}^{a,\xi,t_{n-1},x}-x$, $\tau \in(0,1)$.
Seeing that $\mathbb{E}[\xi^{3}]=0$, $\forall \xi \in \Theta$ and $\sup
\limits_{\xi \in \Theta}\mathbb{E}[|\xi|^{4}]<\infty$, by Proposition
\ref{prop u^epsi-} (ii), we deduce that $|\bar{H}_{\Delta}^{\varepsilon}|\leq
C\sum_{i=1}^{5}\varepsilon^{-2-i}\Delta^{\frac{i+3}{2}}$. This indicates that
for any $n=0,\ldots,N$, $x\in \mathbb{R}$,
\[
\hat{v}_{\Delta}(t_{n},x)\leq \bar{v}_{\Delta}(t_{n},x)+C(\varepsilon
+\sum \limits_{i=1}^{5}\varepsilon^{-2-i}\Delta^{\frac{i+1}{2}}).
\]
Analogously, we can also obtain the upper bound, for any $n=0,\ldots,N$,
$x\in \mathbb{R}$,
\[
\bar{v}_{\Delta}(t_{n},x)\leq \hat{v}_{\Delta}(t_{n},x)+C(\varepsilon
+\sum \limits_{i=1}^{5}\varepsilon^{-2-i}\Delta^{\frac{i+1}{2}}).
\]
Taking $\varepsilon=\Delta^{1/4}$, by (\ref{Euler approxi}), we conclude the proof.
\end{proof}

\begin{remark}
According to Theorem \ref{main theorem}, we know that the discrete schemes
\eqref{Trinomial} and \eqref{GH} admit a $1/4$ order rate of convergence.
\end{remark}


\section{Numerical example}

In this section, we present several numerical simulations to illustrate the
obtained theoretical results. In the following, we apply the scheme
(\ref{Trinomial}) and scheme (\ref{GH}) with $L=6$ to test the examples. In
our tables, we shall denote by CR the convergence rate, TR the Scheme
(\ref{Trinomial}), and GH the Scheme (\ref{GH}), respectively.

\begin{example}
\label{EX3}We first consider the following $G$-heat equation%
\[
\left \{
\begin{array}
[c]{l}%
\partial_{t}v+G\big(\partial_{xx}^{2}v\big)=0,\text{\  \ }(t,x)\in
(0,T]\times \mathbb{R},\\
v(T,x)=g(x),
\end{array}
\right.
\]
where
\[
g(x)=\left \{
\begin{array}
[c]{ll}%
\frac{2}{1+\beta}\cos \big(\frac{1+\beta}{2}x\big), & x\in \big[-\frac{\pi
}{1+\beta}+2k\pi,\frac{\pi}{1+\beta}+2k\pi \big),\\
\frac{2\beta}{1+\beta}\cos \big(\frac{1+\beta}{2\beta}x+\frac{\beta-1}{2\beta
}\pi \big), & x\in \big[\frac{\pi}{1+\beta}+2k\pi,\frac{(2\beta+1)\pi}{1+\beta
}+2k\pi \big),
\end{array}
\right.  k\in \mathbb{Z}\text{,}%
\]
with $\beta=\frac{\overline{\sigma}}{\underline{\sigma}}$. Set $T=1$,
$x_{0}=0$, $\underline{\sigma}=0.1$ and $\overline{\sigma}=1$. The exact
solution of the $G$-heat equation$\ $given in \cite{Song2015} is $v\left(
t,x\right)  =e^{-\frac{\rho^{2}(1-t)}{2}}g\left(  x\right)  $, where
$\rho=\frac{\underline{\sigma}+\overline{\sigma}}{2}$. We solve this example
by the scheme \eqref{Trinomial} and scheme \eqref{GH}, and the errors and
convergence rates are listed in Table \ref{Table 3}. In our test, the value
function and data are often more regular than the assumption, which leads to a
higher convergence rate than the theoretical result. \begin{table}[tbh]
\caption{Errors and convergence rates for Example \ref{EX3}.}%
\label{Table 3}
{\footnotesize
\[%
\begin{tabular}
[c]{||c|c|c|c|c|c|c||}\hline
\multicolumn{7}{||c||}{$|v(0,x_{0})-\hat{v}_{\Delta}(0,x_{0})|$}\\ \hline
Scheme & $N=16$ & $N=32$ & $N=64$ & $N=128$ & $N=256$ & CR\\ \hline
TR & 3.217E-03 & 1.684E-03 & 8.693E-04 & 4.405E-04 & 2.226E-04 & 0.964\\ \hline
GH & 2.605E-03 & 1.331E-03 & 6.663E-04 & 3.336E-04 & 1.665E-04 & 0.993\\ \hline
\end{tabular}
\]
}\end{table}
\end{example}

\begin{example}
\label{exp1}We now test a fully nonlinear HJB equation%
\[
\left \{
\begin{array}
[c]{l}%
\partial_{t}v+\sup \limits_{a\in A}\left[  G(a^{2}D_{x}^{2}v)+(\kappa
x-a)D_{x}v+2\sqrt{a}e^{-r_{0}t}\right]  =0,\text{ \ }(t,x)\in \lbrack
0,T)\times \mathbb{R},\\
v(T,x)=x,
\end{array}
\right.
\]
where the control set $A=[\frac{1}{5},1]$. This can be interpreted as the
value function of a control problem as follows%
\[
v(t,x)=\sup_{\alpha \in \mathcal{A}}\mathbb{\hat{E}}_{t,x}^{\alpha}%
\bigg[X_{T}+\int_{t}^{T}2\sqrt{\alpha_{s}}e^{-r_{0}s}ds\bigg],
\]
with
\begin{equation}
\left \{
\begin{array}
[c]{ll}%
dX_{s}=(\kappa X_{s}-\alpha_{s})ds+\alpha_{s}dB_{s}, & s\in(t,T],\\
X_{t}=x, &
\end{array}
\right.  \label{ex1}%
\end{equation}
where $B_{1}\sim N(0,[\underline{\sigma}^{2},\bar{\sigma}^{2}])$.
It is easy to verify that the solution of \eqref{ex1} is
\[
X_{s}^{t,x}=e^{\kappa(s-t)}x-\int_{t}^{s}\alpha_{r}e^{\kappa(s-r)}dr+\int
_{t}^{s}\alpha_{r}e^{\kappa(s-r)}dB_{r}.
\]
Then, the optimal control is $\alpha_{t}^{\ast}=e^{2t(\kappa-r_{0})-2\kappa
T}$ and the corresponding value function is
\[
v^{\ast}(t,x)=e^{\kappa(T-t)}x+\frac{e^{-\kappa T}}{\kappa-2r_{0}}\left[
e^{(\kappa-2r_{0})T}-e^{(\kappa-2r_{0})t}\right]  .
\]
We test this example by the scheme \eqref{Trinomial}. The errors and
convergence rates for $T=1$, $x_{0}=0$, $\underline{\sigma}=0.5$, $\bar
{\sigma}=1$, $\kappa=0.5$, and $r_{0}=0.03$ are shown in Table \ref{Table 1}.

\begin{table}[ptb]
\caption{Errors and convergence rates for Example \ref{exp1}.}%
\label{Table 1}
{\footnotesize
\[%
\begin{tabular}
[c]{||c|c|c|c|c|c|c||}\hline
$N$ & $16$ & $32$ & $64$ & $128$ & $256$ & CR\\ \hline
$|v(0,x_{0})-\hat{v}_{\Delta}(0,x_{0})|$ & 1.599E-02 & 8.016E-03 & 4.011E-03 &
2.005E-03 & 1.000E-03 & 1.000\\ \hline
\end{tabular}
\]
}\end{table}
\end{example}

\begin{example}
\label{exp2}Consider the following fully nonlinear HJB equation%
\[
\left \{
\begin{array}
[c]{l}%
\partial_{t}v+\sup \limits_{a\in A}\left[  G(\sin^{2}(t+x)D_{x}^{2}%
v)+(2a\sin^{2}(t+x)-1)D_{x}v\right. \\
\text{ \  \  \ }+\left.  2\cos^{2}(t+x)-\cos^{4}(t+x)-a^{2}\right]  =0,\text{
}(t,x)\in \lbrack0,T)\times \mathbb{R},\\
v(T,x)=x,
\end{array}
\right.
\]
where the control set $A=[0,1]$. One can express the solution of the above
equation as the value function of the following control problem
\[
v(t,x)=\sup_{\alpha \in \mathcal{A}}\mathbb{\hat{E}}_{t,x}^{\alpha}%
\bigg[X_{T}+\int_{t}^{T}\left(  2\cos^{2}(s+X_{s})-\cos^{4}(s+X_{s}%
)-\alpha_{s}^{2}\right)  ds\bigg],
\]
with
\[
\left \{
\begin{array}
[c]{ll}%
dX_{s}=\left(  2\alpha_{s}\sin^{2}(s+X_{s})-1\right)  ds+\sin^{2}%
(s+X_{s})dB_{s}, & s\in(t,T],\\
X_{t}=x, &
\end{array}
\right.
\]
where $B_{1}\sim N(0,[\underline{\sigma}^{2},\bar{\sigma}^{2}])$. It can be
checked that the optimal control is $\alpha_{t}^{\ast}=\sin^{2}(t+X_{t})$\ and
the corresponding value function is $v^{\ast}(t,x)=x$. The numerical results
by using the scheme \eqref{GH} are reported in Table \ref{Table 2} with $T=1$,
$x_{0}=0$, $\underline{\sigma}=0.5$, and $\bar{\sigma}=1$. It can be seen that
the numerical simulation is consistent with our theoretical results.
\begin{table}[tbh]
\caption{Errors and convergence rates for Example \ref{exp2}.}%
\label{Table 2}
{\footnotesize
\[%
\begin{tabular}
[c]{||c|c|c|c|c|c|c||}\hline
$N$ & $16$ & $32$ & $64$ & $128$ & $256$ & CR\\ \hline
$|v(0,x_{0})-\hat{v}_{\Delta}(0,x_{0})|$ & 2.598E-05 & 2.303E-05 & 1.465E-05 &
4.165E-06 & 1.058E-06 & 1.170\\ \hline
\end{tabular}
\]
}\end{table}
\end{example}

\section{Appendix}

\begin{proof}
[Proof of Theorem \ref{lemma1}]Let $\tilde{X}_{\cdot}$ be the solution of
(\ref{1}) with coefficients replaced by $b^{(\varepsilon)},\sigma
^{(\varepsilon)}$\ and $h^{(\varepsilon)}$, and $\tilde{v}$ be the value
function of the optimal control problems (\ref{1})-(\ref{2}) with $X_{\cdot
},f,g$ replaced by $\tilde{X}_{\cdot},f^{(\varepsilon)},g^{(\varepsilon)}$
satisfying the estimate%
\begin{equation}
|v(t,x)-\tilde{v}(t,x)|\leq C\varepsilon,\text{ }\forall \left(  t,x\right)
\in \lbrack0,T]\times \mathbb{R}^{m}. \label{A.0}%
\end{equation}
For simplicity, we still use $(b,\sigma,f,g)$ instead of $(b^{(\varepsilon
)},\sigma^{(\varepsilon)},f^{(\varepsilon)},g^{(\varepsilon)})$. By
Propositions 2.3-2.4 in \cite{JPR2019}, one can find a family of smooth
functions $\omega_{\varepsilon}(t,x)\in C_{0}^{\infty}([0,T]\times
\mathbb{R}^{m})$ with $\varepsilon \in(0,1)$, such that for any $t,s\in
\lbrack0,T]$ and $x,y\in \mathbb{R}^{m},$
\begin{equation}
|\omega_{\varepsilon}(t,x)-\omega_{\varepsilon}(s,y)|\leq C(\varepsilon
+|t-s|^{\frac{1}{2}}+|x-y|), \label{A.0.1}%
\end{equation}
and
\begin{equation}
\left \Vert \omega_{\varepsilon}-v_{\Delta}\right \Vert _{\infty}\leq
C\varepsilon \  \  \text{and \ }\left \Vert \partial_{t}^{l}D_{x}^{k}%
\omega_{\varepsilon}\right \Vert _{\infty}\leq C\varepsilon^{1-2l-k},\text{
\ for }k+l\geq1, \label{A.0.2}%
\end{equation}
where $C$ is a positive constant independent of $\Delta$. Moreover, one can
obtain that, for any $\left(  t,x\right)  \in \lbrack0,T-\Delta]\times
\mathbb{R}^{m},a\in A,$
\begin{equation}
\omega_{\varepsilon}(t,x)\geq \mathbb{\hat{E}}_{t,x}^{a}\bigg[\omega
_{\varepsilon}(t+\Delta,\tilde{X}_{t+\Delta})+\int_{t}^{t+\Delta}f(s,\tilde
{X}_{s},a)ds\bigg]. \label{A.1}%
\end{equation}

Step 1. Upper bound on $L_{a}^{0}\omega_{\varepsilon}+2G(L_{a}^{1}%
\omega_{\varepsilon})+f$.\ For any $\left(  t,x\right)  \in \lbrack
0,T-\Delta]\times \mathbb{R}^{m},a\in A$, applying $G$-It\^{o}'s formula to
$\omega_{\varepsilon}$ (twice) and $f$, we have%
\begin{equation}%
\begin{split}
&  \omega_{\varepsilon}(t+\Delta,\tilde{X}_{t+\Delta})+\int_{t}^{t+\Delta
}f(s,\tilde{X}_{s},a)ds\\
&  =\omega_{\varepsilon}(t,x)+L_{a}^{0}\omega_{\varepsilon}(t,x)\Delta
+L_{a}^{1}\omega_{\varepsilon}(t,x)\Delta \langle B\rangle+f(t,x,a)\Delta \\
&  \text{ \  \ }+\int_{t}^{t+\Delta}\int_{t}^{s}L_{a}^{0}f(r,\tilde{X}%
_{r},a)drds+\int_{t}^{t+\Delta}\int_{t}^{s}L_{a}^{1}f(r,\tilde{X}%
_{r},a)d\langle B\rangle_{r}ds\\
&  \text{ \  \ }+\int_{t}^{t+\Delta}\int_{t}^{s}L_{a}^{0}L_{a}^{0}%
\omega_{\varepsilon}(r,\tilde{X}_{r})drds+\int_{t}^{t+\Delta}\int_{t}^{s}%
L_{a}^{1}L_{a}^{0}\omega_{\varepsilon}(r,\tilde{X}_{r})d\langle B\rangle
_{r}ds\\
&  \text{ \  \ }+\int_{t}^{t+\Delta}\int_{t}^{s}L_{a}^{0}L_{a}^{1}%
\omega_{\varepsilon}(r,\tilde{X}_{r})drd\langle B\rangle_{s}+\int
_{t}^{t+\Delta}\int_{t}^{s}L_{a}^{1}L_{a}^{1}\omega_{\varepsilon}(r,\tilde
{X}_{r})d\langle B\rangle_{r}d\langle B\rangle_{s},
\end{split}
\label{A.2}%
\end{equation}
where%
\begin{align*}
L_{a}^{0}u(t,x)  &  =\partial_{t}u(t,x)+\sum_{i=1}^{m}b(t,x,a)\partial_{x_{i}%
}u(t,x)\text{,}\\
L_{a}^{1}u(t,x)  &  =\sum_{i=1}^{m}h_{i}(t,x,a)\partial_{x_{i}}u(t,x)+\frac
{1}{2}\sum_{i,j=1}^{m}[\sigma \sigma^{\top}]_{i,j}\left(  t,x,a\right)
\partial_{x_{i}x_{j}}^{2}u(t,x)\text{.}%
\end{align*}
Noting that $\mathbb{\hat{E}}[L_{a}^{1}\omega_{\varepsilon}(t,x)\Delta \langle
B\rangle]=2G(L_{a}^{1}\omega_{\varepsilon}(t,x))\Delta$, by inserting
(\ref{A.2}) into (\ref{A.1}), we deduce that for any $a\in A$, $\left(
t,x\right)  \in \lbrack0,T-\Delta]\times \mathbb{R}^{m},$%
\begin{equation}
L_{a}^{0}\omega_{\varepsilon}(t,x)+2G(L_{a}^{1}\omega_{\varepsilon
}(t,x))+f(t,x,a)\leq C\varepsilon^{-3}\Delta, \label{A.3}%
\end{equation}
where we have used the bound of the term $L_{a}^{i}f,L_{a}^{i}L_{a}^{j}%
\omega_{\varepsilon}$, $i,j=1,2$.

Step 2. Upper bound on $\tilde{v}(t,x)-v_{\Delta}(t,x)$. For any $\left(
t,x\right)  \in \lbrack0,T-\Delta)\times \mathbb{R}^{m},\alpha \in \mathcal{A}$,
by $G$-It\^{o}'s formula, we have%
\[
\mathbb{\hat{E}}_{t,x}^{\alpha}\big[\omega_{\varepsilon}(T-\Delta,\tilde
{X}_{T-\Delta})\big]=\omega_{\varepsilon}(t,x)+\mathbb{\hat{E}}_{t,x}^{\alpha
}\bigg[\int_{t}^{T-\Delta}L_{\alpha}^{0}\omega_{\varepsilon}(s,\tilde{X}%
_{s})ds+\int_{t}^{T-\Delta}L_{\alpha}^{1}\omega_{\varepsilon}(s,\tilde{X}%
_{s})d\langle B\rangle_{s}\bigg].
\]
In view of Corollary 3.5.8 in Peng \cite{P2010}, we obtain that, for each
$\eta \in M_{G}^{1}(0,T)$,
\begin{equation}
\int_{t}^{T-\Delta}\eta_{s}d\langle B\rangle_{s}\leq2\int_{t}^{T-\Delta}%
G(\eta_{s})ds. \label{A.4}%
\end{equation}
Combining with\ (\ref{A.0.2}), (\ref{A.3}), and (\ref{A.4}), from the
definition of $G(\cdot)$, we can deduce%
\begin{align*}
\mathbb{\hat{E}}_{t,x}^{\alpha}\big[\omega_{\varepsilon}(T-\Delta,\tilde
{X}_{T-\Delta})\big]  &  \leq \omega_{\varepsilon}(t,x)+\mathbb{\hat{E}}%
_{t,x}^{\alpha}\bigg[\int_{t}^{T-\Delta}\{L_{\alpha}^{0}\omega_{\varepsilon
}(s,\tilde{X}_{s})+2G(L_{\alpha}^{1}\omega_{\varepsilon}(s,\tilde{X}%
_{s}))\}ds\bigg]\\
&  \leq v_{\Delta}(t,x)-\mathbb{\hat{E}}_{t,x}^{\alpha}\bigg[\int_{t}%
^{T}f(s,\tilde{X}_{s},\alpha_{s})ds\bigg]+C(\varepsilon+\varepsilon^{-3}%
\Delta).
\end{align*}
Then, by (\ref{A.0.1}), we have
\begin{align*}
\mathbb{\hat{E}}_{t,x}^{\alpha}\big[g(X_{T})\big]  &  \leq \mathbb{\hat{E}%
}_{t,x}^{\alpha}\big[\omega_{\varepsilon}(T-\Delta,\tilde{X}_{T-\Delta
})\big]+\mathbb{\hat{E}}_{t,x}^{\alpha}\big[\omega_{\varepsilon}%
(T,X_{T})-\omega_{\varepsilon}(T-\Delta,\tilde{X}_{T-\Delta})\big]\\
&  \leq v_{\Delta}(t,x)-\mathbb{\hat{E}}_{t,x}^{\alpha}\bigg[\int_{t}%
^{T}f(s,\tilde{X}_{s},\alpha_{s})ds\bigg]+C(\varepsilon+\Delta^{\frac{1}{2}%
}+\varepsilon^{-3}\Delta),
\end{align*}
which implies%
\[
\tilde{v}(t,x)\leq v_{\Delta}(t,x)+C(\varepsilon+\Delta^{\frac{1}{2}%
}+\varepsilon^{-3}\Delta).
\]

For any $\left(  t,x\right)  \in \lbrack T-\Delta,T]\times \mathbb{R}^{m}%
,\alpha \in \mathcal{A}[t,T]$,
\[
\tilde{v}(t,x)-g(x)\leq \mathbb{\hat{E}}_{t,x}^{\alpha}\left[  |X_{T}%
-x|\right]  +C\Delta \leq C\Delta^{\frac{1}{2}},
\]
and similarly, we have $v_{\Delta}(t,x)-g(x)\leq C\Delta^{\frac{1}{2}}$. Thus
\[
\tilde{v}(t,x)-v_{\Delta}(t,x)\leq \tilde{v}(t,x)-g(x)-(v_{\Delta
}(t,x)-g(x))\leq C(\varepsilon+\Delta^{\frac{1}{2}}).
\]
To sum up, for any $\left(  t,x\right)  \in \lbrack0,T]\times \mathbb{R}^{m}$,
we get
\begin{equation}
\tilde{v}(t,x)-v_{\Delta}(t,x)\leq C(\varepsilon+\Delta^{\frac{1}{2}%
}+\varepsilon^{-3}\Delta). \label{A.5}%
\end{equation}

Step 3. Together with (\ref{A.0}) and (\ref{A.5}), we have for any $\left(
t,x\right)  \in \lbrack0,T]\times \mathbb{R}^{m}$,%
\[
v(t,x)-v_{\Delta}(t,x)\leq C(\varepsilon+\Delta^{\frac{1}{2}}+\varepsilon
^{-3}\Delta).
\]
By taking $\varepsilon=\Delta^{1/4}$, the desired result follows.
\end{proof}

\end{document}